\newtheorem*{thm*}{Theorem}
\newtheorem{thm}{Theorem}[section]
\newtheorem{lma}[thm]{Lemma}
\newtheorem{cor}[thm]{Corollary}
\newtheorem{defn}[thm]{Definition}
\newtheorem{prop}[thm]{Proposition}
\newtheorem{rem}[thm]{Remark}
\newcommand{\R}{\mathbb{R}}
\newcommand{\Z}{\mathbb{Z}}
\newcommand{\Q}{\mathbb{Q}}
\newcommand{\E}{\mathbb{E}}
\renewcommand{\P}{\mathbb{P}}
\newcommand{\T}{\mathbb{T}}
\newcommand{\C}{\mathbb{C}}
\newcommand{\s}{\sigma}
\newcommand{\e}{\varepsilon}
\renewcommand{\l}{\lambda}
\renewcommand{\a}{\alpha}
\renewcommand{\b}{\beta}
\newcommand{\g}{\gamma}
\newcommand{\cA}{\mathcal{A}}
\newcommand{\wh}{\widehat}
\newcommand{\wt}{\widetilde}
\DeclareMathOperator{\Id}{Id}
\DeclareMathOperator{\DC}{DC}
\DeclareMathOperator{\Gal}{Gal}
\begin{document}

\title[Fourier transform of self-similar measures]{ Fourier decay of self-similar measures and self-similar sets of uniqueness}
\author{P\'eter P. Varj\'{u}}
\address{P. P. Varj\'u, Centre for Mathematical Sciences, Wilberforce Road, Cambridge CB3~0WB, UK }
\curraddr{}
\email{pv270@dpmms.cam.ac.uk}
\thanks{}
\author{Han Yu}
\address{H. Yu, Centre for Mathematical Sciences, Wilberforce Road, Cambridge CB3~0WB, UK }
\curraddr{}
\email{hy351@maths.cam.ac.uk}
\thanks{ }

\subjclass[2010]{28A80, 42A16, 11A63}

\keywords{Rajchman measure; Fourier decay of self-similar measures; Sets of uniqueness; Digit changes}

\thanks{Both authors have received funding from the European Research Council (ERC) under the European Union’s Horizon 2020 research and innovation programme (grant agreement No. 803711). PV was supported by the Royal Society.}

\date{}

\dedicatory{}

\begin{abstract}
In this paper, we investigate the Fourier transform of self-similar measures on $\mathbb{R}$. We provide quantitative decay rates of Fourier transform of some self-similar measures. Our method is based on random walks on lattices and Diophantine approximation in number fields. We also completely identify all self-similar sets which are sets of uniqueness. This generalizes a classical result of Salem and Zygmund. 
\end{abstract}

\maketitle
\allowdisplaybreaks
\section{Introduction}

\subsection{Background}\label{Frac}
Let $\mathcal{F}=\{f_1,\dots,f_k\}$ be a set of $k\geq 2$ similarities on $\R$:
\begin{equation}\label{eq:IFS}
f_i(x)=r_ix+a_i
\end{equation}
with $0<r_i<1$ for each $i\in\{1,\dots,k\}$. Then by \cite{H81}, there is a unique set $F$ such that
\[
F=\bigcup_{i=1}^{k} f_i(F).
\]
This $F$ is called \emph{self-similar} and the attractor of the IFS $\mathcal{F}$.

In this paper, we will consider self-similar measures. Given a list of positive numbers $(p_1,\dots,p_k)$ with $\sum_{i=1}^k p_i=1$
(a \emph{positive probability vector}), there is a unique probability measure $\mu$ with
\[
\mu=\sum_{i=1}^k p_i f_i\mu.
\]
This measure $\mu$ is supported on $F$ and it is called a \emph{self-similar measure}.
Here, and everywhere in the paper, we write $f\mu$ for the pushforward of $\mu$ under $f$ that is
the measure that satisfies
$(f\mu)(A)=\mu(f^{-1}A)$ for all Borel sets $A$.

Let $\omega\in\mathbb{R}$ and we define the Fourier transform as
\[
\wh{\mu}(\omega)=\int_{\mathbb{R}} e(\omega x)d\mu(x).
\]
For convenience of notation, we write
\[
e(x)=e^{2\pi \mathbf{i} x}.
\]
If $|\wh{\mu}(\omega)|\to 0$ for $\omega\to 0$, we say that $\mu$ is a \emph{Rajchman measure}. 

Our understanding of the Rajchman property of self-similar measures is now almost complete thanks to results in
\cite{LS19} and \cite{B19}.

\begin{thm}[Li-Sahlsten \cite{LS19}]\label{th:LS}
	Let $\mu$ be a non-singleton self-similar measure on $\mathbb{R}$ associated to the IFS \eqref{eq:IFS} and a positive probability vector. If $\log r_i /\log r_j$ is irrational for some $i$ and $j$, then $\mu$ is Rajchman. Moreover, if $\log r_i /\log r_j$ is not Liouville for some $i$ and $j$, that is, there is some $C>0$ such that $|\log r_i /\log r_j-p/q|>q^{-C}$ for all $p,q\in \Z_{>0}$, then
	\[
	\wh\mu(\omega)=O(|\log|\omega||^{-c})
	\]
	for some $c>0$.
\end{thm}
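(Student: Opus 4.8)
\emph{Proof strategy.}\ The natural route is probabilistic, through renewal theory for the associated random walk. Realise $\mu$ as the law of $X=\sum_{j\ge1}a_{I_j}\prod_{l<j}r_{I_l}$, where $(I_l)_{l\ge1}$ are i.i.d.\ with $\P(I_l=i)=p_i$; then $\wh\mu(\omega)=\E[e(\omega X)]$, and self-similarity is the identity $X\overset{d}{=}\sum_{n=1}^{N}a_{I_n}R_{n-1}+R_N X'$, valid for every (possibly random) time $N$, with $R_n=r_{I_1}\cdots r_{I_n}$, $R_0=1$, and $X'$ an independent copy of $X$. Conditioning on $I_1,\dots,I_N$ gives
\[
\wh\mu(\omega)=\E\!\left[e\!\left(\omega\,\textstyle\sum_{n=1}^{N}a_{I_n}R_{n-1}\right)\wh\mu(\omega R_N)\right].
\]
I would take $N=\tau:=\min\{n:\ |\omega|R_n\le1\}$, so that $|\omega R_\tau|$ always lies in the fixed bounded range $(\min_i r_i,1]$; then $\wh\mu(\omega R_\tau)$ is harmlessly bounded by $1$, and everything reduces to bounding the averaged exponential sum $\E[e(\omega\sum_{n\le\tau}a_{I_n}R_{n-1})\,g(\omega R_\tau)]$ for a fixed bounded continuous $g$.

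To extract cancellation I would pass to the walk $S_n:=-\log R_n=\sum_{m\le n}(-\log r_{I_m})$, which has positive i.i.d.\ increments, and rephrase in renewal terms: with $t=\log|\omega|$ one has $\tau=\min\{n:S_n\ge t\}$ and $|\omega R_\tau|=e^{-\mathcal O(t)}$, $\mathcal O(t):=S_\tau-t$ being the overshoot. The hypothesis that $\log r_i/\log r_j\notin\Q$ for some $i,j$ says exactly that the increments $-\log r_1,\dots,-\log r_k$ do not all lie in one lattice $c\Z$, i.e.\ the walk is aperiodic; by classical renewal theory the law of $\mathcal O(t)$ then converges, without periodic obstruction, to the stationary overshoot law, and the refinement actually needed is the \emph{joint} convergence of $\big(\mathcal O(t),\ \omega\sum_{n\le\tau}a_{I_n}R_{n-1}\bmod1\big)$, with the phase becoming uniform mod $1$ and asymptotically independent of $\mathcal O(t)$. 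Since $\int_0^1 e(u)\,du=0$, feeding this back into the identity yields $\wh\mu(\omega)\to0$, the Rajchman conclusion. For the quantitative statement the essential upgrade is an \emph{effective} renewal theorem: when $\log r_i/\log r_j$ is Diophantine, the characteristic function $\xi\mapsto\sum_i p_i e(-\xi\log r_i)$ of the increment distribution stays a polynomial distance from $1$ on each dyadic range of $\xi$ (this uses that a Diophantine $\log r_i/\log r_j$ is polynomially far from every rational of bounded denominator, which is the obstruction to $\sum_i p_i e(-\xi\log r_i)$ being close to $1$), and a Fourier/contour argument then converts this into a rate of order $t^{-c}=|\log|\omega||^{-c}$ in the joint convergence above. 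A convenient way to organise the bound is to prove a one-step inequality of the shape
\[
\sup_{|\omega|\in[e^{t},e^{2t}]}|\wh\mu(\omega)|\ \le\ \rho\sup_{|\omega|\in[e^{t/2},e^{t}]}|\wh\mu(\omega)|+O(t^{-c}),\qquad \rho<1,
\]
and iterate it $\asymp\log t=\log\log|\omega|$ times down to the unit scale (where $|\wh\mu|\le1$ trivially); the accumulated factor is then $(\log|\omega|)^{-c'}$ for some $c'>0$.

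The main obstacle is precisely that effective joint equidistribution of the phase $\omega\sum_{n\le\tau}a_{I_n}R_{n-1}\bmod1$ with the overshoot. The difficulty is structural: for a fixed affine IFS every contraction ratio and every partial product $R_n$ is rigidly determined by the symbol string, so --- unlike in the Gauss-map or Gibbs-measure settings, where a van der Corput estimate uses the non-linearity of the dynamics --- the cancellation here cannot come from any ``local'' oscillation within a single realisation and must be harvested from the averaging intrinsic to the renewal process; this is exactly where aperiodicity of $\{-\log r_i\}$ is indispensable, and the conclusion genuinely fails when all $\log r_i/\log r_j\in\Q$ (e.g.\ the middle-thirds Cantor--Lebesgue measure). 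Getting a polynomial rate out of only a Diophantine --- not, say, exponentially Diophantine --- hypothesis is what holds the rate down to a power of $\log|\omega|$, and some care is required so that the $O(t^{-c})$ errors accumulated over the $\asymp\log\log|\omega|$ iterations, and the dependence of $\rho$ on the data, stay of the asserted order; this in turn constrains the admissible choice of stopping level.
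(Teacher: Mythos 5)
This theorem is quoted in the paper as background from Li--Sahlsten \cite{LS19}; the paper contains no proof of it, so your proposal can only be measured against the argument of \cite{LS19}. Your framework --- realising $\wh\mu(\omega)$ as $\E[e(\omega A_\tau)\wh\mu(\omega R_\tau)]$ with $A_\tau=\sum_{n\le\tau}a_{I_n}R_{n-1}$, passing to the additive walk $S_n=-\log R_n$, invoking the non-lattice renewal theorem for the overshoot, and upgrading to a quantitative renewal theorem under the Diophantine hypothesis via lower bounds on $|1-\sum_i p_ie(\xi\log r_i)|$ --- is indeed the skeleton of the actual proof, and your diagnosis of where aperiodicity and the Diophantine condition enter is accurate.

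The gap is in the step you yourself flag as ``the main obstacle'': you need the phase $\omega A_\tau\bmod 1$ to become uniform and asymptotically independent of the overshoot, and you offer no mechanism for proving this. That claim is essentially equivalent in strength to the theorem: $\E[e(\omega A_\tau)]\to0$ is, up to the bounded factor $\wh\mu(\omega R_\tau)$, the statement $\wh\mu(\omega)\to0$ itself, so taking it as the key lemma is circular. It is also false without further hypotheses --- if all $a_i=0$ then $A_\tau\equiv0$ --- and your sketch never invokes the non-singleton assumption, which is necessary (a Dirac mass is not Rajchman). Li and Sahlsten avoid this entirely: they first apply Cauchy--Schwarz/Jensen over the stopped words, which removes the unimodular phases $e(\omega a_w)$ and reduces the problem to bounding $\iint\E\bigl[e\bigl(\omega(x-y)e^{-S_\tau}\bigr)\bigr]\,d\mu(x)\,d\mu(y)$. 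The cancellation then comes not from equidistribution of phases but from the oscillatory integral of $u\mapsto e(ze^{-u})$ against the limiting overshoot law, which is absolutely continuous precisely because the walk is non-lattice; this integral decays as $|z|\to\infty$ by a Riemann--Lebesgue type estimate, and the non-singleton hypothesis enters through $\mu\times\mu(\{|x-y|\le\delta\})\to0$ to control the near-diagonal pairs. Without this device, or some substitute for it, your outline reduces the theorem to an unproved statement of the same depth. A secondary issue: in your iteration scheme the errors $O(t_j^{-c})$ accumulated at the scales $t_j\asymp t2^{-j}$ do not obviously sum to $O((\log|\omega|)^{-c'})$ unless the contraction factor $\rho$ beats $2^{-c}$; this is fixable, but as written the bookkeeping does not close.
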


This result reduces the problem of characterizing the Rajchman property of self-similar measures to the case when the contraction ratios
satisfy $r_j=r^{l_j}$ for some $r\in(0,1)$ and $l_j\in\Z_{>0}$. This has been analyzed by Br\'emont.

\begin{thm}[Br\'emont \cite{B19}]\label{th:B}
Let $k\ge 2$, $r\in(0,1)$, $l_1,\ldots,l_k\in\Z_{>0}$ with $\gcd(l_1,\ldots, l_k)=1$ and $r_i=r^{l_i}$ for $i=1,\ldots,k$.
Let $\mu$ be a non-singleton self-similar measure associated to the IFS \eqref{eq:IFS} and a positive
probability vector.
If $\mu$ is not Rajchman, then $r^{-1}$ is a Pisot number and the IFS can be conjugated by a suitable similarity to a form
such that $a_j\in\Q(r)$ for all $j$.
\end{thm}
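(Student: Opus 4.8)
The plan is to follow the strategy underlying Salem's theorem on Bernoulli convolutions, replacing the infinite product that appears there by the random–walk structure of $\mu$, and then to feed the resulting Diophantine information into a classical theorem of Pisot. Write $\lambda=r^{-1}>1$. Since conjugating the IFS by a similarity $x\mapsto\gamma x+\tau$ replaces each $a_i$ by $\gamma a_i+\tau(1-r^{l_i})$, leaving $r$ and the $l_i$ untouched and altering $\widehat\mu$ only by a unimodular factor after a dilation of the variable, we may test the non-Rajchman hypothesis in the original coordinates and record the conclusion after a convenient conjugation. So suppose $\mu$ is not Rajchman: there are $\xi_m\to\infty$ with $|\widehat\mu(\xi_m)|\geq\delta>0$. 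As $\mu$ is not a point mass we may fix indices with $a_i\neq a_j$ (if all the $a_i$ coincide then the $l_i$ cannot all be equal, and after dilating so that the common value is $1$ the conclusion is immediate; this degenerate case is disposed of separately). The goal is then twofold: show that along the geometric scales $r^n\xi_m$ certain real numbers built from the $a_i$ are, uniformly in $m$, summably close to integers; and extract from this that $\lambda$ is Pisot and that a dilation carries all the $a_i$ into $\Q(\lambda)$.

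In the model case $l_1=\cdots=l_k=1$ this is quick: $\mu$ is the law of $\sum_{n\geq0}a_{i_n}r^n$, so $\widehat\mu(\xi)=\prod_{n\geq0}\phi(r^n\xi)$ with $\phi(t)=\sum_i p_i\,e(a_i t)$, and the identity $|\phi(t)|^2=1-4\sum_{i<j}p_ip_j\sin^2(\pi(a_i-a_j)t)$ together with $\sin^2(\pi x)\geq4\|x\|^2$ (with $\|\cdot\|$ the distance to $\Z$) gives $1-|\phi(t)|\gtrsim\sum_{i<j}p_ip_j\|(a_i-a_j)t\|^2$. If $|\widehat\mu(\xi_m)|\geq\delta$ then $\sum_n\bigl(1-|\phi(r^n\xi_m)|\bigr)\leq\log(1/\delta)$, whence
\[
\sum_{n\geq0}\|(a_i-a_j)r^n\xi_m\|^2\leq C_{ij}\qquad\text{for every pair }i,j\text{ and every }m.
\]

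For general $l_i$ the product formula fails, and supplying a substitute is the main obstacle. Here $\mu$ is the law of $X=\sum_{n\geq1}a_{i_n}r^{S_{n-1}}$ with $S_n=l_{i_1}+\cdots+l_{i_n}$, so each level $m$ is visited at most once and, conditionally on the i.i.d.\ sequence of gaps, $\widehat\mu$ is an expectation of a product over the visited levels. The difficulty is that the factors at different levels are \emph{not} independent — changing the step taken at a level changes which higher levels are visited. I would handle this by conditioning on the entire ``environment'' except for the orders of a greedily chosen disjoint family of adjacent gap blocks equal to a fixed pair $\{l,l'\}$: reversing such a block leaves every higher partial sum unchanged and perturbs $X$ by exactly $\beta_{l,l'}\,r^m$, where $m$ is the level in front of the block and $\beta_{l,l'}=a_i(1-r^{l'})-a_j(1-r^{l})$ with $l_i=l,\ l_j=l'$; crucially, these reversals behave as genuinely independent fair coin flips. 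Running the one–dimensional estimate of the model case on these coins (via the Markov–type inequality $\mathbb{P}(Y\leq\log(2/\delta))\geq\delta/2$, with $Y\geq0$ the relevant sum of defects, to pass from a bound on an expectation to one valid on a positive-measure set of environments) should produce, for a finite family of directions $\beta$, bounds $\sum_n\|\beta\,r^n\xi_m\|^2\leq C_\beta$ uniformly in $m$ (after passing to a further subsequence of $(\xi_m)$ prescribed by the environment). The hypothesis $\gcd(l_1,\dots,l_k)=1$ enters twice: by renewal theory it makes every large level accessible and the block patterns occur with positive density, and it forces the $\Q$–span of the $\beta$'s to contain not only every difference $a_i-a_j$ but also $r\,a_i$ for each $i$. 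The delicate point — which I expect to be the heart of the matter — is to upgrade ``positive density of good scales'' into control at \emph{every} scale, as the next step requires; this is where one must work hardest, presumably with the number-field random-walk machinery advertised in this paper.

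Finally, with $\alpha=\lambda=r^{-1}$, put $N_m=\lfloor\log\xi_m/\log\alpha\rfloor$ and $\eta_m=r^{N_m}\xi_m\in[1,\alpha)$, and pass to a subsequence with $\eta_m\to\eta\in[1,\alpha]$. Re-indexing $n=N_m-j$ turns $\sum_n\|\beta r^n\xi_m\|^2\leq C_\beta$ into $\sum_{j=0}^{N_m}\|\beta\eta_m\alpha^j\|^2\leq C_\beta$, and Fatou gives $\sum_{j\geq0}\|\beta\eta\,\alpha^j\|^2\leq C_\beta<\infty$ for each direction $\beta$. Taking $\beta=a_i-a_j\neq0$ and invoking Pisot's theorem — if $\alpha>1$ and $\theta\neq0$ are real with $\sum_n\|\theta\alpha^n\|^2<\infty$ then $\alpha$ is a Pisot number and $\theta\in\Q(\alpha)$ — shows that $\lambda=r^{-1}$ is Pisot; applying it to every $\beta$ gives $\beta\eta\in\Q(\alpha)$, and since the $\beta$'s span over $\Q$ a space containing each $r\,a_i$ we get $r\,a_i\,\eta\in\Q(\alpha)$, hence $a_i\eta=\alpha\,(r\,a_i\,\eta)\in\Q(\alpha)$ for all $i$. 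Conjugating the IFS by the dilation $x\mapsto\eta x$ then places all translations $\eta a_i$ in $\Q(\alpha)=\Q(\lambda)$, as claimed. (In the model case only the differences are needed: conjugating by $x\mapsto\eta x-\eta a_{i_0}/(1-r)$ sends $a_{i_0}$ to $0$ and each $a_i$ to $\eta(a_i-a_{i_0})\in\Q(\lambda)$.)
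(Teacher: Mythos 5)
Your overall architecture matches the paper's: reduce non-Rajchmanness to a uniform bound $\sum_n\|\beta r^n\xi_m\|^2\le C_\beta$ for suitable directions $\beta$ built from the translations, normalise $\xi_m$ into a fundamental domain, pass to a limit, and invoke Pisot's theorem; your endgame (Fatou, the identification of the $\beta$'s, the conjugation) is essentially the paper's verbatim, and your ``block reversal'' observation is the same mechanism as the paper's Lemma \ref{lm:newIFS}, which composes the IFS $p$ times to manufacture two maps with equal contraction ratio whose translation parts differ by $b a_j$ with $b\in\Q(r)\setminus\{0\}$. The model case $l_1=\cdots=l_k=1$ is done correctly.

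However, there is a genuine gap exactly where you flag one, and it is the heart of the theorem. Your plan is: condition on the ``environment'', note that the coin flips reversing disjoint blocks are independent, apply a Markov-type inequality to find a positive-measure set of environments on which $\sum_{\text{blocks}}\|\beta r^m\xi\|^2\le\log(2/\delta)$, and hope this yields $\sum_{\text{all }m}\|\beta r^m\xi\|^2<\infty$. This last implication does not follow: the set of levels $m$ at which a block starts depends on the environment, so what you need is a lower bound, uniform in $m$, on $\P(\{\text{a block starts at level }m\}\cap\{\text{environment is good}\})$. Renewal theory (Erd\H{o}s--Feller--Pollard) bounds $\P(\text{block at }m)$ below and Markov bounds $\P(\text{good})$ below, but the intersection of two events of fixed positive probability can be empty, so no uniform-in-$m$ bound results and you cannot sum over all $m$ --- and Pisot's theorem genuinely needs the full sum $\sum_{n\ge 0}\|\theta\lambda^n\|^2<\infty$ (a positive-density subsum is useless, since the proof of Pisot's theorem builds a linear recurrence from near-integrality at \emph{consecutive} exponents). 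The paper closes this gap with a different probabilistic device: it shows $W_n|\wh\mu(X_n)|$ is a submartingale for a suitable weight process $W_n$, introduces the stopping times $\tau(t)=\min\{n:X_n\le r^t\omega\}$, and proves (Lemma \ref{lm:conditional}) that \emph{conditionally on the entire past up to $\tau(t-C)$} the walk hits level $t$ via the special equal-ratio pair with probability bounded below. Processing the scales $t$ sequentially in this way multiplies in a factor $1-C^{-1}u(r^t\omega)$ at \emph{every} scale $t$, which is Proposition \ref{pr:quant} and hence Theorem \ref{MAIN}; that quantitative bound is what your argument is missing. A secondary, repairable issue: your claim that the $\Q$-span of the $\beta_{l,l'}$ contains each $r a_i$ only makes sense after Pisot's theorem has been applied (so that $1-r^{l_i}$ is a nonzero element of $\Q(\lambda)$ by which one may divide), and pinning down the individual $a_i\eta$ rather than differences of $a_i/(1-r^{l_i})$ requires also conjugating by a translation, which the statement permits.
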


We recall that a real number $\l>1$ is Pisot if it is an algebraic integer and all of its Galois conjugates have modulus less than $1$.

\begin{rem}
Br\'emont also shows that when the IFS satisfies the conclusion of the theorem, then there is a choice of the probability vector
that makes the self-similar measure non-Rajchman, and in fact this holds if the probability vector is outside a finite union of proper
submanifolds of the simplex of all probability vectors.
Moreover, he carried out a detailed analysis of the set of exceptional probability vectors that make the measure Rajchman.
We refer to his paper for more details.
\end{rem}

\subsection{Results in this paper}\label{sc:results}

In this paper, we consider the following two questions:
\begin{itemize}
	\item[Q1:] Which self-similar sets are sets of uniqueness? (We recall the definition below.)
	\item[Q2:] If a self-similar measure is Rajchman, what can we say about the decay rate of its Fourier transform at infinity?
\end{itemize}

A set $E\subset[0,1]$ is a \emph{set of uniqueness} if any trigonometric series
\[
	\sum_{n\geq 0} c_n e(n x),
\]
that converges to $0$ on $[0,1]\backslash E$ is trivial, that is $c_n=0$ for all $n$.
A set $E\subset\R$ is a set of uniqueness if its image in $\R/\Z\equiv[0,1)$ is.
Research on sets of uniqueness can be traced back to the work of Riemann, Cantor and others in the 19'th century
on the question, whether the limit function of a trigonometric series determines the coefficients uniquely.
This topic has a vast and rich literature.
We direct the reader to the book \cite{KL87} for a general reference on the subject.

To answer Question Q1, we extend the classical Salem-Zygmund theorem \cite{KL87}*{Theorem III.4.1} by identifying all self-similar sets which are sets of uniqueness.

\begin{thm}\label{Uniqueness}
A self-similar set is a set of uniqueness if and only if it has zero Lebesgue measure and its underlying IFS can be conjugated by a
suitable similarity to the form
\[
\{f_j(x)=r^{l_j}x+a_j:j=1,\ldots, k\},
\]
where $r^{-1}$ is a Pisot number, $l_1,\ldots,l_k\in\Z_{>0}$ with $\gcd(l_1,\ldots,l_k)=1$ and $a_j\in\Q(r)$.
\end{thm}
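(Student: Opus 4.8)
The plan is to prove the two implications separately: the forward direction (a set of uniqueness must have the stated form) follows quickly from the cited results, while the reverse direction is the genuine generalization of the hard half of the Salem--Zygmund theorem.

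\emph{Forward direction.} I would use the classical fact (\cite{KL87}) that a set carrying a nonzero Rajchman measure is a set of multiplicity, so a set of uniqueness can support no Rajchman measure. Suppose $F$ is a set of uniqueness. First, $F$ has zero Lebesgue measure: otherwise the normalized restriction of Lebesgue measure to $F$ is absolutely continuous, hence Rajchman by Riemann--Lebesgue, a contradiction. Next, fix any positive probability vector and let $\mu$ be the associated self-similar measure, so $\operatorname{supp}\mu=F$ and, by the previous remark, $\mu$ is not Rajchman. By Theorem~\ref{th:LS} this forces every ratio $\log r_i/\log r_j$ to be rational, so there are $r\in(0,1)$ and $l_j\in\Z_{>0}$ with $\gcd(l_1,\dots,l_k)=1$ and $r_j=r^{l_j}$. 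Theorem~\ref{th:B} then applies and yields that $r^{-1}$ is Pisot and the IFS is conjugate to one with $a_j\in\Q(r)$, which is exactly the asserted form.

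\emph{Reverse direction, setup.} For the converse, assume $F$ has zero Lebesgue measure and, after conjugating by a similarity (which may be taken with coefficients in $\Q(r)$, and hence preserves the class of sets of uniqueness), that $F$ is the attractor of $\{f_j(x)=r^{l_j}x+a_j\}$ with $\theta:=r^{-1}$ Pisot of degree $d$, $\gcd(l_1,\dots,l_k)=1$, and $a_j\in\Q(r)$. Clearing a common denominator by a further rational scaling, I may assume $a_j\in\Z[\theta]$ and $F\subset[0,1]$. Every point of $F$ then has the form $x=\sum_{k\ge0}a_{i_{k+1}}\theta^{-L_k}$ with $L_k=l_{i_1}+\cdots+l_{i_k}$; since $\gcd(l_j)=1$, the numerical semigroup generated by the $l_j$ contains all large integers, so every large $N$ is realized as some $L_k$.

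\emph{Reverse direction, heart.} The goal is to show $F$ is an $H^{(m)}$-set, with $m$ governed by $d$; by Piatetski-Shapiro's theorem (\cite{KL87}) this gives that $F$ is a set of uniqueness. Writing the Galois conjugates $\theta=\theta_1,\theta_2,\dots,\theta_d$ with $|\theta_i|<1$ for $i\ge2$, and embedding $\Q(\theta)$ into $\R^d$ by its conjugates, the Pisot property gives $\|\alpha\theta^N\|\to0$ for every $\alpha\in\Z[\theta]$ because the conjugate contributions $\sum_{i\ge2}\sigma_i(\alpha)\theta_i^N$ decay geometrically. Thus multiplication by the rational integer $\operatorname{Tr}(\theta^N)$ pushes the integer part of $\theta^N x$ to $0$ modulo $1$ while the conjugate coordinates contract into a fixed bounded region. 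Using the near-integer scalings $\operatorname{Tr}(\theta^N)$ as $N\to\infty$, together with the conjugate coordinates as the remaining components of an integer linear form, one checks that the images of $F$ in $\T^m$ all avoid a fixed nonempty open set; this open set is supplied by a gap in $F$, since zero Lebesgue measure forces $F$ to have empty interior, hence to be nowhere dense, and the self-similar structure propagates the gap through the near-integer dynamics. This establishes the $H^{(m)}$ property and finishes the proof.

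\emph{Main obstacle.} The difficulty lies entirely in the last paragraph: the natural dynamical scaling is by the algebraic integer $\theta^N$, which is not a rational integer, so a one-dimensional $H$-set argument with integer frequencies does not directly apply. The resolution is to pass to the $d$-dimensional conjugate embedding and exploit the Pisot contraction to realize $F$ as an $H^{(m)}$-set; making precise the choice of integer linear forms, the bounded region captured by the conjugate coordinates, and above all the location of the avoided open set is the technical core. Here the measure-zero hypothesis is indispensable and cannot be dropped, as $\theta=2$ with $F=[0,1]$ shows that the Pisot form alone does not force uniqueness. A secondary, routine point is verifying that the conjugating similarity and the denominator-clearing scaling preserve the class of sets of uniqueness.
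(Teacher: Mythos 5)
Your forward direction coincides with the paper's (which the paper itself notes is essentially classical): zero measure via Riemann--Lebesgue, the Piatetski-Shapiro criterion showing a set of uniqueness supports no Rajchman measure, then Theorems \ref{th:LS} and \ref{th:B}. Your reverse direction lives in the right family of arguments (near-integer Pisot dynamics plus the conjugate embedding, forcing images of $F$ to avoid an open set in a torus), but it has a genuine gap at exactly the step you flag as the ``technical core,'' and the mechanism you propose there would fail. First, scaling by $\operatorname{Tr}(\theta^N)$ alone does \emph{not} push the integer-like part of $\theta^N x$ close to $\Z$: writing $\theta^N x = P_N + T_N$ with head $P_N\in\Z[\theta]$ and tail $T_N$, the trace argument gives only $\|P_N\|\le\bigl|\sum_{i\ge 2}\sigma_i(P_N)\bigr|\le (d-1)B/(1-\rho)$ (where $B$ bounds the conjugates of the digits and $\rho<1$ bounds the conjugates of $\theta$), a quantity that is bounded but in general larger than $1/2$. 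So the image of $F$ need not stay anywhere near the tail set, and in a single coordinate it can be dense. The paper repairs this with a step absent from your sketch: Minkowski's theorem produces an algebraic integer $\gamma_R$ with $|\gamma_R|\le CR^{d-1}$ and \emph{all} conjugates $O(R^{-1})$, whence $\|\gamma_R r^{-j}S_1^{(N)}(\omega)\|\le CR^{-1}$ uniformly (Lemma \ref{lm:uniqueness}).

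Second, your source for the avoided open set --- a gap of $F$ coming from nowhere density, ``propagated through the near-integer dynamics'' --- is the wrong mechanism. Once the multiplier $\gamma_R\asymp R^{d-1}$ is introduced (and it must be, by the first point), the tail set, a fixed compact null set $L$ (a finite union of affine images of $F$), is dilated by $\gamma_R$ and wraps around the circle on the order of $R^{d-1}$ times; nowhere density of $L$ says nothing about the $CR^{-1}$-neighborhood of $\gamma_R L$ modulo $1$, whose gaps can be much finer than $R^{-1}$. The paper's mechanism is quantitative: the Lebesgue measure of the $R^{-1}$-neighborhood of $h(\gamma_R L)$ in $\T^d$ is $O\bigl(R^{-d}N(L,R^{-d})\bigr)$, which tends to $0$ since $L$ is null (so $sN(L,s)\to 0$); this is where the measure-zero hypothesis is used, via covering numbers rather than via a gap. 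Note also that this volume count only wins in the full $d$-dimensional torus --- the one-dimensional analogue is $O\bigl(R^{-1}N(L,R^{-d})\bigr)=o(R^{d-1})$, which is vacuous --- and this is precisely why the paper works with the homomorphism $h(x)=(x,r^{-1}x,\dots,r^{-(d-1)}x)$ and invokes the Meyer--Rajchman theorem (Theorem \ref{MR}) with the \emph{real} dilations $t_N=\gamma_R r^{-N}$, rather than Piatetski-Shapiro's integer-vector $H^{(m)}$ theorem as you propose: your route would additionally require replacing $\gamma_R\theta^{N+j}$ by nearest integers and controlling the recurrence errors and the normality of the vectors, which is feasible (it is the shape of the classical Salem--Zygmund proof) but is likewise missing from the sketch. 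Your remaining points (clearing denominators to get $a_j$ algebraic integers, affine invariance of uniqueness) match the paper's reductions and are fine.
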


The necessity of the conditions in this theorem is not new.
Indeed, it is well known (see \cite{KL87}*{Proposition I.3.1}) that sets of uniqueness always have zero Lebesgue measure.
Moreover, Piatetski-Shapiro and in another work Kahane and Salem (see \cite{KL87}*{Theorem II.4.1})
gave a criterion for a closed
set being a set of uniqueness in terms of the distributions supported on the set.
They proved that a closed set $E$ is a set of uniqueness if and only if it does not support a non-zero pseudofunction.
For the definition of pseudofunctions and for more background we refer to \cite{KL87}*{Chapter II}.
For our purposes it is enough to note that a probability measure is a pseudofunction if and only if it is Rajchman.
This means that a set of uniqueness cannot support a Rajchman measure, hence the necessity of the condition on the IFS in Theorem
\ref{Uniqueness}
follows from Theorems \ref{th:LS} and \ref{th:B}.

For question Q2, we prove the following result.
\begin{thm}\label{MAIN}
	Let $k\geq 2$ be an integer. Let $r_1=r^{l_1},\dots,r_k=r^{l_k}$ for some $r\in(0,1)$ and $l_1,\ldots,l_k\in\Z_{>0}$
	with $\gcd(l_1,\ldots,l_k)=1$. Furthermore, let $a_1,\dots,a_k\in\mathbb{R}$. We consider the IFS \eqref{eq:IFS}. Assume that $l_1=l_2$. Let $(p_1,\ldots,p_k)$ be a probability vector with strictly positive entries and write $\mu$ for the corresponding self-similar measure.
	
	Then there is a constant $C>0$ depending only on $l_1,\ldots,l_k$ and $p_1,\ldots,p_k$ such that
	\[
	|\wh\mu(\omega)|\le\exp\Big(-C^{-1}\sum_{j>C}\|(a_1-a_2)\omega r^j\|^2)\Big),
	\]
	where $\|\cdot\|$ denotes distance to the nearest integer.
\end{thm}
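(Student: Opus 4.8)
The plan is to pass to the probabilistic description of $\mu$ and to exploit the hypothesis $l_1=l_2$ to decouple the ``digit $1$ versus digit $2$'' choice from the geometry of the construction. Write $\mu$ as the law of the random sum $Y=\sum_{n\ge1}a_{X_n}r^{L_{n-1}}$, where $(X_n)$ are i.i.d.\ with $\P(X_n=i)=p_i$ and $L_{n-1}=\sum_{s<n}l_{X_s}$, so that $\wh\mu(\omega)=\E[e(\omega Y)]$. I would then condition on the \emph{merged} process $(Z_n)$ defined by $Z_n=\ast$ if $X_n\in\{1,2\}$ and $Z_n=X_n$ otherwise. Because $l_1=l_2$, the exponents $(L_{n-1})$ and the positions of the $\ast$-steps are measurable with respect to $(Z_n)$; conditionally, the only remaining randomness is, at each $\ast$-step, an independent choice between $a_1$ and $a_2$. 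Writing $b_m:=\|(a_1-a_2)\omega r^m\|^2$, this conditioning factorises $\E[e(\omega Y)\mid Z]$ into a product of terms of the shape $(1-q)+q\,e(\theta)$ with $q=p_1/(p_1+p_2)$ and $\theta=(a_1-a_2)\omega r^{L_{n-1}}$.

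The elementary identity $|(1-q)+q\,e(\theta)|^2=1-4q(1-q)\sin^2(\pi\theta)$ together with $\sin^2(\pi\theta)\ge4\|\theta\|^2$ gives $|(1-q)+q\,e(\theta)|\le\exp(-c_0\|\theta\|^2)$ with $c_0=8q(1-q)$. Taking moduli and then expectations over $Z$ reduces the theorem to the renewal estimate
\[
\E\Big[\exp\Big(-c_0\sum_{m\ge0}I_m b_m\Big)\Big]\le\exp\Big(-C^{-1}\sum_{m>C}b_m\Big),
\]
where $I_m$ is the indicator that the merged renewal process has an epoch at level $m$ whose outgoing step is a $\ast$. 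The increments of this renewal process take the value $l_i$ with probability $p_i$, so $\gcd(l_1,\dots,l_k)=1$ makes it aperiodic.

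The heart of the matter, and the step I expect to be the main obstacle, is to control this expectation despite the strong dependence between the $I_m$ (which levels are epochs is highly correlated, so neither Jensen nor a crude tail-sum induction suffices). The key input is a \emph{uniform charging bound}: there are constants $K$ and $\rho_1>0$, depending only on $l_1,\dots,l_k$ and $p_1,\dots,p_k$, such that $\P(I_m=1\mid\mathcal F_{m-K})\ge\rho_1$ for every $m\ge K$, where $\mathcal F_{j}$ records the walk up to and including the step leaving level $j$. This follows from the renewal theorem: conditionally on the past, the next epoch lies within $l_{\max}$ of level $m-K$, so for $K=l_{\max}+n_0$ the aperiodic renewal mass of reaching level $m$ is bounded below by some $\rho>0$; multiplying by the fresh, hence independent, probability $p_1+p_2$ that the outgoing step is a $\ast$ yields $\rho_1$.

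With this in hand I would split $\Z_{\ge0}$ into the $K$ residue classes modulo $K$. Along a fixed class the levels are $K$ apart, so the charging bound applies with the earlier factors of the class already $\mathcal F_{m-K}$-measurable; peeling the product from the top and using $e^{-c_0 I_m b_m}=1-I_m(1-e^{-c_0 b_m})$ together with $1-e^{-c_0 b}\ge c\,b$ on $[0,1/4]$ gives, for each residue class $r$,
\[
\E\Big[\exp\Big(-c_0\sum_{m\equiv r,\ m\ge K}I_m b_m\Big)\Big]\le\exp\Big(-c\,\rho_1\sum_{m\equiv r,\ m\ge K} b_m\Big).
\]
Finally I would recombine the $K$ classes, whose contributions are \emph{not} independent, by the generalised H\"older inequality with all exponents equal to $K$ (rerunning the per-class estimate with $c_0$ replaced by $Kc_0$); this produces a bound of the form $\exp(-C^{-1}\sum_{m>C}b_m)$ with $C$ depending only on the $l_i$ and $p_i$, as required. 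The two places where the hypotheses enter are unmistakable: $l_1=l_2$ makes the initial conditioning clean, and $\gcd(l_1,\dots,l_k)=1$ supplies the aperiodicity behind the uniform charging bound.
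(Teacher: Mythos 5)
Your proposal is correct and takes essentially the same route as the paper's proof: your merged process $(Z_n)$ is precisely the paper's random walk on the indices $\{2,\ldots,k\}$ with weights $\wt p_i$, your factor bound $|(1-q)+q\,e(\theta)|\le\exp(-c_0\|\theta\|^2)$ is the paper's $w_2$ and the estimate $u(\omega)\ge C^{-1}\|(a_1-a_2)\omega\|^2$, your uniform charging bound is exactly Lemma~\ref{lm:conditional} derived from the Erd\H{o}s--Feller--Pollard renewal theorem (Theorem~\ref{EFP}), and your H\"older recombination over the $K$ residue classes is equivalent to the device in Proposition~\ref{pr:quant} of multiplying the inequalities for $t=s,\ldots,s+C_0-1$ and extracting a $C_0$-th root. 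The only presentational difference is that you condition on the merged digit sequence to obtain an exact product formula for $|\E[e(\omega Y)\mid Z]|$, whereas the paper packages the same conditioning as the submartingale $W_n|\wh\mu(X_n)|$ with optional stopping at the first-passage times $\tau(t)$.
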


The assumption $l_1=l_2$ may seem restrictive.
However, it is always possible to realize $\mu$ as a self-similar measure associated to an IFS that contains
similarities with equal contraction ratios.
See Lemma \ref{lm:newIFS} for a construction.

Sums appearing in the right hand side of the conclusion have been studied extensively in the literature.
We observe, that Theorem \ref{MAIN} implies Theorem \ref{th:B} thanks to a classical theorem of Pisot.
We will give the details in Section \ref{sc:appl}.

The following can be deduced from a result of Bufetov and Solomyak \cite{BS14}*{Proposition 5.5} and Theorem \ref{MAIN}.

\begin{cor}\label{cr:BS}
Let $k\geq 2$ be an integer. Let $r_1=r^{l_1},\dots,r_k=r^{l_k}$ for some $r\in(0,1)$ and $l_1,\ldots,l_k\in\Z_{>0}$ with $\gcd(l_1,\ldots,l_k)=1$. Assume that $r^{-1}$ is an algebraic integer that is not a Pisot or Salem number. Let $\mu$ be a non-singleton self-similar measure associated to the IFS \eqref{eq:IFS} and a positive probability measure. Then
\[
|\wh\mu(\omega)|=O(|\log|\omega||^{-c})
\]
for some $c>0$.
\end{cor}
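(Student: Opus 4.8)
The plan is to combine Theorem \ref{MAIN} with the cited result of Bufetov and Solomyak \cite{BS14}*{Proposition 5.5}, which (in the relevant form) asserts that if $r^{-1}>1$ is an algebraic number that is not Pisot or Salem, then there is a constant $c_0>0$ and a sequence of scales such that the discrepancy sums $\sum_{j=1}^{N}\|\theta\, r^{-j}\|^2$ grow at least like $c_0 N$ for every fixed $\theta\neq 0$; more precisely, one extracts a quantitative lower bound of the shape $\sum_{C<j\le N}\|\theta r^j\|^2\gg \log N$ valid along a suitable sequence of $\omega$ once one sets $\theta=(a_1-a_2)\omega$. First I would reduce to the case $l_1=l_2$: by Lemma \ref{lm:newIFS} the measure $\mu$ can be rewritten as a self-similar measure for an IFS with contraction ratios of the form $r^{l_j'}$ in which two of the ratios coincide, and since $\mu$ is non-singleton at least one pair of translation parameters $a_1',a_2'$ in the new system is distinct — otherwise $\mu$ would be supported on a single point. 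This places us exactly in the hypotheses of Theorem \ref{MAIN} with $a_1'-a_2'\neq 0$.

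Next, with $b:=a_1'-a_2'\neq 0$ fixed, Theorem \ref{MAIN} gives
\[
|\wh\mu(\omega)|\le\exp\Bigl(-C^{-1}\sum_{j>C}\|b\,\omega\, r^j\|^2\Bigr),
\]
so it suffices to show that, for all $\omega$ with $|\omega|$ large, the tail sum on the right is at least $c\log|\omega|$ for some $c>0$ depending only on the data. Here I would invoke the Bufetov–Solomyak input applied to the algebraic number $\beta:=r^{-1}$ (which is algebraic precisely because $r^{-1}$ being neither Pisot nor Salem is a statement about an algebraic integer; in the general case one first notes that if $r$ is transcendental then $\log r_i/\log r_j$ is irrational and Theorem \ref{th:LS} already gives a logarithmic—indeed we only need \emph{some}—decay, so we may assume $r$ algebraic). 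For such $\beta$ that is not Pisot or Salem, the sequence $(b\omega\beta^{-j})_{j\ge 0} \bmod 1$ cannot be concentrated near $0$: Proposition 5.5 of \cite{BS14} yields a uniform $\delta>0$ with
\[
\#\{\,C<j\le N:\ \|b\,\omega\,r^{j}\|\ge\delta\,\}\ \ge\ \delta\,(N-C)
\]
for all $N$ large, where $\delta$ is independent of $\omega$. Summing only over these indices gives $\sum_{C<j\le N}\|b\omega r^j\|^2\ge\delta^3(N-C)$. Choosing $N\asymp\log|\omega|$ (this is the natural range: for $j$ much larger than $\log_{1/r}|\omega|$ the terms $\|b\omega r^j\|$ are genuinely small and contribute nothing, so the effective sum really lives at scales $j\lesssim\log|\omega|$) produces the bound $\sum_{j>C}\|b\omega r^j\|^2\gg\log|\omega|$, hence $|\wh\mu(\omega)|\le\exp(-c'\log|\omega|)\le|\log|\omega||^{-c}$ after relabelling — or rather $|\wh\mu(\omega)|\ll|\omega|^{-c'}$, which is stronger; the statement is phrased with $|\log|\omega||^{-c}$ because that is what survives when the Bufetov–Solomyak lower bound is only available along a subsequence of scales rather than for all $\omega$.

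The main obstacle is the last point: Proposition 5.5 of \cite{BS14} as stated gives equidistribution-type or positivity-type information about $\{b\omega r^j\}$ that is genuinely uniform in $\omega$ only under the non-Pisot, non-Salem hypothesis, and one must check that its quantitative form is strong enough to feed into the exponential bound of Theorem \ref{MAIN} at the right range of $j$. Concretely I would need to (i) verify the precise quantitative statement of \cite{BS14}*{Proposition 5.5} — whether it is a pointwise lower bound for every $\omega$ or only along a sequence $\omega_n\to\infty$, since this dictates whether one gets decay for all $\omega$ or only the weaker $O(|\log|\omega||^{-c})$ claimed; (ii) handle the transcendental case separately via Theorem \ref{th:LS} as indicated; and (iii) track that all constants ($\delta$ from \cite{BS14}, and $C$ from Theorem \ref{MAIN}) depend only on $l_1,\dots,l_k$, $r$, and the probability vector, so that the final $c>0$ is legitimate. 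Everything else is bookkeeping: passing to the $l_1=l_2$ model, discarding the negligibly small tail $j\gg\log|\omega|$, and the elementary inequality $\|x\|\ge\delta\Rightarrow\|x\|^2\ge\delta^2$.
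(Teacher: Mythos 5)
Your overall architecture is the same as the paper's: reduce to $l_1=l_2$ via Lemma \ref{lm:newIFS} (choosing $j$ with $a_j\neq 0$, which exists since $\mu$ is non-singleton), feed the resulting difference $b=a_1'-a_2'\neq 0$ into Theorem \ref{MAIN}, and lower-bound the exponent using Bufetov--Solomyak. However, you have misquoted the key input, and the misquotation is not a minor bookkeeping issue --- it changes the shape of the conclusion. Proposition 5.5 of \cite{BS14} does \emph{not} give a positive proportion of indices $j\le N$ with $\|b\omega r^j\|\ge\delta$; it gives only
\[
\sum_{j\ge 0}\|\omega r^j\|^2\ \ge\ c\,\log\log|\omega|
\]
for all sufficiently large $|\omega|$ (equivalently, $\DC(x,\lambda,\e)\ge C\log\log x$, i.e.\ about $\log\log x$ digit changes among the roughly $\log x$ available positions --- a very sparse set). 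Plugging this into Theorem \ref{MAIN} gives $|\wh\mu(\omega)|\le\exp(-c'\log\log|\omega|)=|\log|\omega||^{-c'}$, which is exactly the claimed bound and is the entire proof in the paper. Your version, $\sum_{C<j\le N}\|b\omega r^j\|^2\gg N\asymp\log|\omega|$, would yield \emph{polynomial} Fourier decay for every such self-similar measure, which is far stronger than anything Proposition 5.5 can deliver; your subsequent speculation that the logarithmic conclusion arises because the lower bound ``is only available along a subsequence'' is also off the mark --- the bound is pointwise for all large $|\omega|$, it is just of size $\log\log$ rather than $\log$. You did flag the need to check the precise quantitative form as the main obstacle, so the gap is at least localized correctly, but as written the central estimate is wrong.

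A secondary error: your proposed treatment of the case of transcendental $r$ via Theorem \ref{th:LS} does not work, because here all contraction ratios are powers of the same $r$, so $\log r_i/\log r_j=l_i/l_j$ is rational regardless of whether $r$ is algebraic; Theorem \ref{th:LS} never applies in this setting. No case split is needed --- the Bufetov--Solomyak bound is used directly for the given $r^{-1}$, exactly as the paper does.
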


Recall that a real number $\l>1$ is Salem, if it is an algebraic integer, all of its Galois conjugates have modulus at most $1$,
and at least one of them has modulus $1$.

A special case of this result for homogeneous self-similar measures has been proved by Gao and Ma
\cite{GM-Fourier-decay}*{Theorem 1.1}.

In the next section, we state some results in number theory that yield another application of Theorem \ref{MAIN}.
Before we can state this, we need to introduce some terminology.

\begin{defn}\label{df:Liou}
Let $K$ be a number field. We say that $\alpha\in\mathbb{R}\setminus K$ is Liouville over $K$, if for any integer $H>0$ there exists $\beta\in K$ such that
\[
|\alpha-\beta|\leq \frac{1}{e^{H h(\beta)}}.
\]
Here, and everywhere in this paper, $h(\beta)$ stands for the absolute logarithmic height of $\beta$.
That is, if $P(x)=a_d\prod(x-\b_j)\in\Z[x]$ is the minimal polynomial of $\beta$ with leading coefficient $a_d$ and complex
roots $\b_j$, then
\[
h(\b)=\log|a_d|+\sum\max(0,\log|\b_j|).
\]
For more on heights, we refer to \cite{Mas}*{Chapter 14}.
\end{defn}

We note that algebraic numbers outside $K$ are not Liouville, as follows from the Liouville inequality, see e.g. \cite{Mas}*{Proposition 14.13}.

\begin{cor}\label{cr:Liou}
Let $k\geq 3$ be an integer. Let $r_1=r^{l_1},\dots,r_k=r^{l_k}$ for some $r\in(0,1)$ and $l_1,\ldots,l_k\in\Z_{>0}$ with $\gcd(l_1,\ldots,l_k)=1$. Furthermore, let $a_1,\dots,a_k\in\mathbb{R}$. Suppose $a_1=0$ and
$a_2/a_3$ is not Liouville over $\Q(r)$. Then
\[
|\wh\mu(\omega)|=O(|\log|\omega||^{-c})
\]
for some $c>0$.
\end{cor}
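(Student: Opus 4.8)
The strategy is to use Theorem \ref{MAIN} after arranging for two of the maps to share a contraction ratio. Since $k\ge 3$, we may pass (via Lemma \ref{lm:newIFS}) to a representation of $\mu$ as a self-similar measure whose underlying IFS contains two maps with equal contraction ratios; the key point is that Lemma \ref{lm:newIFS} should allow us to pick which two of the original translation parts survive as the two relevant ``$a_1,a_2$'' in the conclusion of Theorem \ref{MAIN}, or at least to produce two maps whose translation parts differ by $c(a_2-a_3)$ for some explicit nonzero rational combination $c$. Concretely, I would take the two maps with equal ratio to be compositions $f_{i_1}\circ\cdots$ and $f_{j_1}\circ\cdots$ chosen so that the difference of their fixed points (equivalently, of their translation parts after normalization) is a nonzero element times $a_2-a_3$; after an affine conjugation we may assume $a_1=0$, so $a_2-a_3$ is the quantity we must show is badly approximable. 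Then Theorem \ref{MAIN} gives
\[
|\wh\mu(\omega)|\le\exp\Big(-C^{-1}\sum_{j>C}\|(a_2-a_3)\omega r^j\|^2\Big),
\]
and it remains to show the exponent is $\gg\log|\omega|$, i.e. that $\sum_{j>C}\|(a_2-a_3)\omega r^j\|^2\gg\log|\omega|$.

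The heart of the matter is thus a Diophantine lower bound: for $\theta:=a_2-a_3$ not Liouville over $\Q(r)$ — hence in particular $r$ algebraic, say $r\in\overline{\Q}$, with $\Q(r)$ a fixed number field $K$ — one must show that for every $\omega$ with $|\omega|$ large, a positive proportion (in a logarithmic sense) of the values $\|\theta\omega r^j\|$, $C<j\lesssim\log|\omega|$, are bounded below by a fixed constant, or at least that their squares sum to $\gg\log|\omega|$. The mechanism I expect to use: if too many consecutive $\|\theta\omega r^j\|$ were small, then $\theta\omega$ would be very well approximated by elements of $K$ of controlled height (the approximants being rationals with denominators a power of $r^{-1}$, or more precisely algebraic numbers in $K$ whose height grows linearly in $j$), and chaining these approximations would force $\theta$ itself to be Liouville over $K$ — contradicting the hypothesis. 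This is the number-theoretic input alluded to in the sentence ``In the next section, we state some results in number theory''; I would formulate it as: if $\theta\notin K$ is not Liouville over $K$, then there exist $c_0>0$ and $\epsilon_0>0$ with $\#\{C<j\le N:\|\theta\omega r^j\|\ge\epsilon_0\}\ge c_0 N$ for all $\omega\ne0$ and all large $N$, uniformly (possibly with an allowed finite exceptional set of $\omega$, or a mild dependence on $|\omega|$ that is harmless after taking $N\asymp\log|\omega|$).

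I would carry this out in three steps. First, record the reduction to $l_1=l_2$ via Lemma \ref{lm:newIFS}, tracking the translation parts so that the relevant difference is a fixed nonzero multiple of $a_2-a_3$; here the case distinction $a_1=0$ is used to normalize so that only the ratio $a_2/a_3$ (equivalently $a_2-a_3$ up to scaling) matters, and one checks that ``$a_2/a_3$ not Liouville over $\Q(r)$'' is equivalent to ``$a_2-a_3$ not Liouville over $\Q(r)$'' given $a_1=0$. Second, prove the Diophantine lemma above: assume $\|\theta\omega r^j\|<\epsilon$ for all $j$ in a long block $[J,J+L]$; deduce that $\theta\omega r^J$ lies within $O(\epsilon)$ of some $\xi\in\frac{1}{1-r}\Z[r]\subset K$ (or a similar lattice) — obtained by solving the near-integrality conditions — with $h(\xi)\ll L$, and then that $\theta$ is within $e^{-\Omega(L)}$ of $\xi/(\omega r^J)$, an element of $K(\omega)$; if $\omega$ is itself chosen so that this stays in a fixed number field (e.g. $\omega$ rational, or one localizes the Fourier estimate at rationals and uses density), the Liouville-over-$K$ property is violated once $L$ is large. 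Third, combine: Theorem \ref{MAIN}'s sum over $C<j\le N$ with $N\asymp c\log|\omega|$ contains $\gg N$ indices with $\|\theta\omega r^j\|\ge\epsilon_0$, so the sum is $\ge\epsilon_0^2 c_0 N\gg\log|\omega|$, whence $|\wh\mu(\omega)|\le\exp(-c'\log|\omega|)=O(|\omega|^{-c'})$, which is stronger than the stated logarithmic bound; passing from a lower bound on $\widehat\mu$ at a dense set of $\omega$ to all large $\omega$ is done by the standard continuity/Fejér-kernel argument, or the number-theoretic lemma is proved uniformly in $\omega\in\R$ from the start.

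The main obstacle is the Diophantine lemma — specifically, controlling the heights of the approximating algebraic numbers and ruling out the ``bad'' configurations of $\omega$ without losing uniformity. The naive argument produces approximants in the field $\Q(r,\omega)$, whose degree and discriminant one does not control; the fix is to clear $\omega$ by noting that it appears linearly, so that the near-integrality of $\theta\omega r^j$ over a fixed block is a statement about $\theta$ alone modulo a single unknown scalar, which can be eliminated by taking ratios of two such conditions (this is exactly why $k\ge 3$, giving us a genuine ratio $a_2/a_3$, is needed rather than $k\ge2$). Making this elimination quantitative, with height bounds linear in the block length $L$, and then invoking the non-Liouville hypothesis with $H$ chosen larger than the implied constant, is the crux; everything else is bookkeeping built on Theorem \ref{MAIN} and Lemma \ref{lm:newIFS}.
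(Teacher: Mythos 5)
There is a genuine gap, and it sits exactly at the step you call the heart of the matter. Your plan reduces everything to a lower bound for a \emph{single} sum, $\sum_{j>C}\|(a_2-a_3)\omega r^j\|^2\gg\log|\omega|$, uniformly in $\omega$. This cannot be true. The only case of interest is $r^{-1}$ Pisot or Salem (Corollary \ref{cr:BS} disposes of the rest), and when $r^{-1}$ is Pisot you may choose $\omega$ so that $(a_2-a_3)\omega=r^{-m}$: then $\|(a_2-a_3)\omega r^j\|=\|r^{j-m}\|$ is exponentially small for all $0\le j\le m$ and the whole sum is $O(1)$. No Diophantine property of $a_2-a_3$ alone can prevent this, because $\omega$ is a free real parameter and any single sequence $\theta\omega r^j$ can be made digit-change free by tuning $\omega$. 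Two further concrete claims in your plan are also false: (i) ``$a_2/a_3$ not Liouville over $\Q(r)$'' does not imply ``$a_2-a_3$ not Liouville over $\Q(r)$'' (take $L$ Liouville, $a_3=L/\sqrt2$, $a_2=a_3+L$, so $a_2/a_3=1+\sqrt2$ is algebraic while $a_2-a_3=L$); and (ii) the fallback of proving the estimate for rational $\omega$ and invoking density fails, because the approximant $\xi/(\omega r^J)$ then has height involving $h(\omega)$, which is unbounded on any dense set of rationals near a fixed real, so the non-Liouville hypothesis gives no contradiction. Your predicted conclusion $O(|\omega|^{-c})$ is a symptom of the same over-claim.

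You do name the correct mechanism in your final paragraph --- eliminate $\omega$ by comparing two near-integrality conditions --- but a setup built on the single quantity $a_2-a_3$ leaves you with nothing to compare. The paper instead applies Lemma \ref{lm:newIFS} with $j=2$ and observes that the resulting IFS contains \emph{two} pairs of equal-ratio maps, with translation differences $ba_2$ and $ba_3$ for the \emph{same} $b\in\Q(r)$. Applying Theorem \ref{MAIN} to each pair gives
\[
|\wh\mu(\omega)|\le\exp\bigl(-c(\DC(ba_2\omega,r^{-1},\e)+\DC(ba_3\omega,r^{-1},\e))\bigr),
\]
and the number-theoretic input is Theorem \ref{AG2}: if both sequences $ba_2\omega r^{-t}$ and $ba_3\omega r^{-t}$ were nearly integral over the same long block, Lemma \ref{LIO} would produce $\beta_1,\beta_2\in\Q(r)$ of controlled height approximating $(ba_2\omega)_*$ and $(ba_3\omega)_*$, and the ratio $\beta_2/\beta_1$ --- in which the unknown $\omega$ cancels --- would approximate $a_3/a_2$ well enough to make it Liouville over $\Q(r)$. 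This is precisely where the hypothesis on the \emph{ratio} $a_2/a_3$ enters, and it yields only $\log\log|\omega|$ guaranteed digit changes (not a positive proportion of the $\asymp\log|\omega|$ available positions), which is why the conclusion is logarithmic rather than polynomial decay.
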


We note that the condition $a_1=0$ is just a choice of normalization, it does not restrict generality.
In light of Corollary \ref{cr:BS}, this result is interesting only when $r^{-1}$ is a Pisot or Salem number.

\subsection{A result in number theory}

In this section, we discuss a result in Diophantine approximation in number fields, which we will use to derive Corollary \ref{cr:Liou}. We refer to the book \cite{B} for related literature.

\begin{defn}
Let $\lambda>1$ be a real number. Let $x>0$ be a real number. For any real number $\e>0$, we define
\[
\DC(x,\lambda,\e)=\#\{t\in\Z_{\ge 0}:x\lambda^{-t}\geq 1, \|x\lambda^{-t}\|>\e         \},
\] 
where $\|x\|$ is the distance form $x$ to its nearest integer.
\end{defn}
We refer to numbers $t$ that satisfy the condition in the definition of $\DC(x,\lambda,\e)$ as the places of digit changes of $x$ in base $\lambda$ with precision $\e$. Suppose that $\lambda=2$. Then $x\lambda^{-t}$ is close to an integer if the binary expansion of $x$ has a long consecutive block of $0$ or $1$ at the $t$-th position (counting from the left). We take the terminology 'digit change' from here.

\begin{thm}\label{AG2}
	Let $\lambda>1$ be an algebraic number. Let $\gamma$ be a number which is not Liouville over $\Q(\lambda)$. Then there are a number $\e>0$ and a constant $C$ such that for all $x>e^e$
	\[
	\DC(x,\lambda,\e)+\DC(x\gamma,\lambda,\e)\geq C \log\log x.
	\]
\end{thm}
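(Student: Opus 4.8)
The plan is to reduce the statement to a counting problem about how often the orbit $\{x\lambda^{-t}\}$ and $\{x\gamma\lambda^{-t}\}$ can simultaneously be close to integers, and then to use the non-Liouville hypothesis on $\gamma$ over $\Q(\lambda)$ to rule out too many near-coincidences. First I would set up the right scale: write $T=\lfloor \log x/\log\lambda\rfloor$, so that $x\lambda^{-t}\geq 1$ precisely for $t\leq T$, and $T\asymp\log x$. We want to show that among the $T+1$ values of $t$ the number of $t$ with $\|x\lambda^{-t}\|>\e$ plus the number with $\|x\gamma\lambda^{-t}\|>\e$ is $\gtrsim\log T\asymp\log\log x$. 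Equivalently, letting $B$ be the set of "bad" indices $t\le T$ where \emph{both} $\|x\lambda^{-t}\|\le\e$ and $\|x\gamma\lambda^{-t}\|\le\e$, it suffices to produce $\gtrsim\log T$ indices $t\le T$ that are not in $B$; and it is enough to find one good index in each of $\asymp\log T$ disjoint ranges, which I would take to be dyadic-type blocks $[\lambda^{s},\lambda^{s+1})$-scaled pieces — more precisely blocks of the form $\{t: c^{s}\le \lambda^{T-t}< c^{s+1}\}$ for a suitable constant $c$, so that within each block the quantity $\lambda^{T-t}$ (which is $\asymp x\lambda^{-t}$) ranges over a full multiplicative window.

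The heart of the matter is the following local statement: if in a window of admissible $t$ the values $x\lambda^{-t}$ all lie in $[M,\lambda M]$ for some $M\ge 1$, then one cannot have $\|x\lambda^{-t}\|\le\e$ and $\|x\gamma\lambda^{-t}\|\le\e$ for all $t$ in the window, provided $\e$ is small enough and $M$ is large enough (depending only on $\lambda$ and $\gamma$). Suppose for contradiction both are small for two consecutive admissible $t$, say $t$ and $t'$ with $\lambda^{t'-t}$ an integer power (here I use that $\lambda$ algebraic lets me work in $\Q(\lambda)$, clearing denominators): then $x\lambda^{-t}=n+\delta$ and $x\lambda^{-t'}=n'+\delta'$ with $n,n'\in\Z$, $|\delta|,|\delta'|\le\e$, and similarly $x\gamma\lambda^{-t}=m+\eta$, $x\gamma\lambda^{-t'}=m'+\eta'$. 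Dividing, $\gamma$ is pinned: $\gamma = (m+\eta)/(n+\delta)$ is within $O(\e/M)$ of the rational (hence $\Q(\lambda)$-) number $m/n$, and iterating across the block forces $\gamma$ to be simultaneously close to a whole family of elements of $\Q(\lambda)$ of controlled height. The key quantitative input is that if $\|x\lambda^{-t}\|\le\e$ for \emph{all} $t$ in a long run, the integers $n_t\approx x\lambda^{-t}$ satisfy an approximate recursion coming from $\lambda$'s minimal polynomial, and the corresponding $\Q(\lambda)$-approximant $\beta$ to $\gamma$ has logarithmic height $h(\beta)=O(\text{length of run}\cdot\log\lambda + \log M)$ while $|\gamma-\beta|$ is exponentially small in the length of the run. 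Matching this against the non-Liouville inequality $|\gamma-\beta|>e^{-Hh(\beta)}$ (which holds for every fixed $H$ once we choose it after the implied constants) shows the run length is $O(1)$, i.e. $B$ cannot contain a long consecutive block, and more to the point each multiplicative block of bounded length contains a good index.

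Assembling: choose $\e$ and the block size so that each of the $\asymp\log T$ multiplicative blocks $\{t\le T: \lambda^{T-t}\in[c^s,c^{s+1})\}$ contains at least one index $t$ with $\|x\lambda^{-t}\|>\e$ or $\|x\gamma\lambda^{-t}\|>\e$; summing over $s$ gives $\DC(x,\lambda,\e)+\DC(x\gamma,\lambda,\e)\ge (\#\text{blocks})\ge C\log\log x$ for $x>e^e$, with $\e$ and $C$ depending only on $\lambda$ and $\gamma$. The main obstacle I anticipate is the height bookkeeping in the local step: one must show that a long near-integer run for $x\lambda^{-t}$ genuinely produces a \emph{good} (low-height, exponentially-close) rational approximation to $\gamma$ in $\Q(\lambda)$, which requires carefully using the algebraicity of $\lambda$ to convert the approximate linear recursion satisfied by the $n_t$ into an exact algebraic relation with explicit height bounds, and controlling error accumulation; handling the case where $\lambda$ is not an algebraic integer (denominators) and where $\gamma$ might a priori be algebraic (excluded, but must be invoked via the Liouville inequality, cf.\ \cite{Mas}*{Proposition 14.13}) are the technical fiddly points.
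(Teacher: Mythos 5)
There is a genuine gap, and it sits exactly where the $\log\log x$ in the conclusion comes from. Your decomposition into blocks $\{t\le T:\lambda^{T-t}\in[c^{s},c^{s+1})\}$ produces about $T\asymp\log x$ blocks, each containing only $O(1)$ values of $t$ --- not the $\asymp\log T$ blocks you assert. If you could really show that every such constant-length block contains an index with $\|x\lambda^{-t}\|>\e$ or $\|x\gamma\lambda^{-t}\|>\e$, you would have proved a lower bound $\gtrsim\log x$, far stronger than the theorem, and the non-Liouville hypothesis cannot deliver it. Your own local computation shows why: a bounded number of consecutive near-integer hits at scale $M$ only pins $\gamma$ to within $O(\e/M)$ of an element of $\Q(\lambda)$ of height $\asymp\log M$, while the non-Liouville inequality $|\gamma-\beta|>e^{-Hh(\beta)}\asymp M^{-H}$ holds with an $H$ \emph{fixed by the hypothesis}, not chosen by you (to contradict non-Liouville you must beat \emph{every} $H$, so your parenthetical about choosing $H$ after the implied constants has the quantifier backwards). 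Since $\e/M\ge M^{-H}$ for large $M$ once $H>1$, there is no contradiction, and ``run length $O(1)$'' does not follow. The bookkeeping claim $h(\beta)=O(L\log\lambda+\log M)$ for a run of length $L$ is equally fatal: if the height grows linearly in $L$ while the error is only $e^{-cL}$, the ratio of error exponent to height stays bounded and can never exceed the given threshold $H$.

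The paper repairs both defects at once. The windows are geometrically nested in the exponent, $[n_j,Kn_j]$ with $n_{j+1}=Kn_j+1$, so there are only $\asymp\log\log x$ of them and each spans a multiplicative factor $K$; and Lemma \ref{LIO} shows that a digit-change-free window $[n,Kn]$ yields an approximant $\beta\in\Q(\lambda)$ whose height is $O(n)$ --- controlled by the \emph{short} end of the window alone, via the integers nearest $\alpha\lambda^{n},\ldots,\alpha\lambda^{n+d-1}$ and the exact linear recurrence forced on them by the minimal polynomial of $\lambda$ --- while the error is $e^{-C_2Kn}$, controlled by the \emph{long} end. The error-to-height exponent ratio is then $\asymp K$, and taking $K$ arbitrarily large contradicts non-Liouville. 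Your idea of converting the near-integer condition into a recursion for the nearest integers $n_t$ is the right mechanism for that lemma, but without (a) the geometric window structure and (b) the observation that the approximant's height does not grow with the window length, the argument does not close.
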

\begin{rem}
	By a result due to Bufetov and Solomyak \cite{BS14}*{Proposition 5.5}, if $\lambda$ is neither Pisot nor Salem, we have
	\[
	\DC(x,\lambda,\e)\geq C \log\log x,
	\]
	where $\e,C$ are constants depending on $\lambda$. So this result is new only when $\l$ is Pisot or Salem.
\end{rem}

\subsection{Organization of the paper}
We prove Theorem \ref{MAIN} in Section \ref{sc:decay}. 
We prove Theorem \ref{AG2} in Section \ref{sc:DC}.
We discuss the applications of Theorem \ref{MAIN} stated above in Section \ref{sc:appl}. 
Finally, we prove Theorem \ref{Uniqueness} in Section \ref{sc:uniq}.

\section{Fourier decay of self-similar measures}\label{sc:decay}

The purpose of this section is to prove Theorem \ref{MAIN}.
Recall our IFS has scaling factors $r^{l_1},\ldots,r^{l_k}$ with $\gcd(l_1,\ldots,l_k)=1$.
We assume $l_1=l_2$.
The translations are $a_1,\ldots,a_k\in\R$ and the positive probability vector is $(p_1,\ldots,p_k)$.

We first note the formula
\begin{equation}\label{eq:ssF}
\wh\mu(\omega)=\sum_{i=1}^k p_i\wh\mu(r^{l_i}\omega)e(a_i\omega),
\end{equation}
which is just the self-similarity relation expressed in terms of the Fourier transform.
Using $l_1=l_2$, this implies
\begin{equation}\label{eq:ssF2}
|\wh\mu(\omega)|\le \sum_{i=2}^k \wt p_i|\wh\mu(r^{l_i}\omega)|w_i(r^{l_i}\omega),
\end{equation}
where
\[
\wt p_i =
\begin{cases}
p_1+p_2, & \text{if $i=2$}\\
p_i, & \text{if $i>2$}
\end{cases}
\]
and
\[
w_i(r^{l_i}\omega) = 
\begin{cases}
\frac{|p_1e(a_1\omega)+p_2e(a_2\omega)|}{p_1+p_2}, &\text{if $i=2$}\\
1, & \text{if $i\ge2$.}
\end{cases}
\]

We express \eqref{eq:ssF2} in probabilistic notation, which will allow us to generalize it in a convenient way.
We write $I_1,I_2,\ldots$ for a sequence of independent random indices drawn form $\{2,\ldots,k\}$ with
probabilities $\{\wt p_2,\ldots, \wt p_k\}$.
We fix some $\omega\in\R$ and introduce the random variables
\[
X_n= \Big(\prod_{j=1}^n r^{l_{I_j}}\Big)\omega
\]
and
\[
W_n=\prod_{j=1}^n w_{I_j}(X_j).
\]
With this notation, \eqref{eq:ssF2} can be written as
\[
|\wh\mu(\omega)|\le \E[W_1|\wh\mu(X_1)|],
\]
and more generally, applying \eqref{eq:ssF2} for $X_n$ in place of $\omega$ we can write
\[
|\wh\mu(X_{n})|\le\E\Big[\frac{W_{n+1}}{W_{n}}|\wh\mu(X_{n+1})|\;\Big|\;I_1,\ldots, I_n\Big],
\]
which we rewrite as
\[
W_n|\wh\mu(X_{n})|\le\E[W_{n+1}|\wh\mu(X_{n+1})|\;|\;I_1,\ldots, I_n].
\]
This means that $W_n|\wh\mu(X_n)|$ is a submartingale, and the following is an immediate consequence of the
Optional Stopping Theorem, see e.g. \cite{BW-basic-probability}*{Theorem 3.6(c) and Remark 3.6}.

\begin{lma}\label{lm:stopping}
Let $\tau$ be a stopping time, that is, a random variable such that the event $\tau=n$ is measurable with respect to the
$\s$-algebra generated by $I_1,\ldots, I_n$ for all $n$.
Assume $\P(\tau<\infty)=1$.
Then
\[
|\wh\mu(\omega)|\le\E[W_\tau|\wh\mu(X_\tau)|].
\]
\end{lma}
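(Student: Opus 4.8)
The plan is to combine the submartingale property of $W_n|\wh\mu(X_n)|$ established just above with the optional stopping theorem for submartingales. First I would address integrability: since $0\le w_i\le 1$ for all $i$, the weights satisfy $0\le W_n\le 1$, and $|\wh\mu|\le 1$ because $\mu$ is a probability measure; hence $W_n|\wh\mu(X_n)|$ is a bounded submartingale (with respect to the filtration $\mathcal{F}_n=\s(I_1,\ldots,I_n)$), so all the required uniform integrability hypotheses are automatic.

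Next I would handle the stopping time. For a bounded stopping time, say $\tau\le N$ almost surely, the optional stopping theorem for submartingales gives directly
\[
|\wh\mu(\omega)| = W_0|\wh\mu(X_0)| \le \E[W_\tau|\wh\mu(X_\tau)|],
\]
using that $W_0=1$ (empty product) and $X_0=\omega$. For a general stopping time $\tau$ that may be unbounded, I would apply this to the truncated stopping times $\tau\wedge N$ to get $|\wh\mu(\omega)|\le\E[W_{\tau\wedge N}|\wh\mu(X_{\tau\wedge N})|]$ for every $N$, and then pass to the limit $N\to\infty$. On the event $\{\tau<\infty\}$ we have $W_{\tau\wedge N}|\wh\mu(X_{\tau\wedge N})|\to W_\tau|\wh\mu(X_\tau)|$; on the event $\{\tau=\infty\}$ the integrand is bounded by $1$, so dominated convergence (justified by the uniform bound by $1$) yields $\E[W_{\tau\wedge N}|\wh\mu(X_{\tau\wedge N})|]\to\E[W_\tau\mathbf{1}_{\tau<\infty}|\wh\mu(X_\tau)|]+\limsup_N\E[W_N\mathbf{1}_{\tau=\infty}|\wh\mu(X_N)|]$; discarding the nonnegative second term (or, in the intended application where $\tau<\infty$ a.s., noting it vanishes) gives the claimed inequality.

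The only genuinely delicate point is the measurability/finiteness bookkeeping for unbounded $\tau$ — one has to be slightly careful about what happens on $\{\tau=\infty\}$ and about whether $X_\tau$, $W_\tau$ are even defined there; but since everything in sight is bounded by $1$ and nonnegative, no real obstacle arises, and in the applications the stopping time will be finite almost surely (indeed bounded), so the bounded case of optional stopping suffices. In short, this is the standard optional stopping theorem applied to a bounded submartingale, and I would simply cite it (e.g. from a standard probability reference) after recording the boundedness observation.
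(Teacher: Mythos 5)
Your proposal is correct and is exactly the route the paper takes: the paper establishes the submartingale property of $W_n|\wh\mu(X_n)|$ immediately before the lemma and then invokes optional stopping as a ``standard result'' without further detail, which is precisely what you spell out (boundedness by $1$ making all integrability hypotheses automatic, plus truncation at $\tau\wedge N$ for unbounded $\tau$). Your extra care about the event $\{\tau=\infty\}$ is harmless and, as you note, moot in the application since the stopping times $\tau(t)$ used later are in fact bounded.
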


Before going further, we explain our strategy to prove Theorem \ref{MAIN} informally.
Since $|\wh\mu(X_\tau)|\le 1$ always, it is enough to estimate $\E[W_\tau]$
to get a bound on $|\wh\mu(\omega)|$.
Each time we have $I_j=2$, the value of $W$ decreases by a factor $w_2(X_j)$.
In what follows we will argue that the random walk $X_j$ will hit any point of the form $r^t\omega$
with some substantial probability if $t$ is not too small, and we will try to estimate the aggregated
effect of the $w_2$ factors.
Our main tool is the following result.

\begin{thm}[Erd\H{o}s--Feller--Pollard \cite{EFP49}, see also \cite{B53}]\label{EFP}
Consider the random walk such that $Y_0=0$ and $Y_{i+1}=Y_i+l_{I_i}$.
For each $t\geq 0$, define $P_t$ to be the probability that $Y_i$ visits $t$ for some $i\ge0$. Then we have
	\[
	\lim_{t\to\infty} P_t= \frac{1}{\E[l_{I_1}]}.
	\]
\end{thm}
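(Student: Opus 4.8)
The plan is to recognize the statement as the classical discrete renewal theorem for the aperiodic i.i.d.\ renewal process with steps $l_{I_1},l_{I_2},\ldots$, and to reproduce (or simply cite \cite{EFP49}) its proof. Since the indices $I_i$ take values in $\{2,\ldots,k\}$ with all probabilities $\wt p_i>0$, the step $l_{I_1}$ is a positive integer supported exactly on $\{l_2,\ldots,l_k\}$; its mean $\E[l_{I_1}]$ is finite because the support is finite, and, crucially, its support has $\gcd=1$. Indeed $l_1=l_2$ gives $\gcd(l_2,\ldots,l_k)=\gcd(l_1,\ldots,l_k)=1$, so the renewal process is aperiodic. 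Writing $f_j=\P(l_{I_1}=j)$ and $u_k=P_k$ for the visit probabilities, I would start from the two basic identities: the renewal equation
\[
u_0=1,\qquad u_k=\sum_{j=1}^k f_j\,u_{k-j}\quad(k\ge1),
\]
obtained by conditioning on the first step, and the coverage identity
\[
\sum_{m=0}^{k} u_{k-m}\,r_m=1\qquad(k\ge0),\qquad r_m:=\P(l_{I_1}>m),
\]
which expresses that $k$ lies in exactly one renewal interval. Note $\sum_{m\ge0} r_m=\E[l_{I_1}]$.

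The core is to show that $u_k$ converges, and the elementary way is to analyse $U=\limsup_k u_k$ and $L=\liminf_k u_k$. Taking a subsequence $k_n$ with $u_{k_n}\to U$, the renewal equation exhibits $u_{k_n}$ as a weighted average of the $u_{k_n-j}$ with weights $f_j$; the high-order steps contribute negligibly since $r_J\to0$, so after truncating at a fixed $J$ each retained term is at most $U+o(1)$ while the whole average tends to $U$. This forces every $u_{k_n-j}$ with $f_j>0$ to tend to $U$ as well. Propagating this through sums of admissible steps, and using that the numerical semigroup generated by $\{l_2,\ldots,l_k\}$ contains all sufficiently large integers (here aperiodicity is essential), one deduces $u_{k_n-m}\to U$ for every fixed $m\ge0$. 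The same reasoning applied to a subsequence realizing $L$ gives $u_{k'_n-m}\to L$ for every $m$.

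Finally I would insert these limits into the coverage identity. Since $0\le u_{k-m}r_m\le r_m$ and $\sum_m r_m=\E[l_{I_1}]<\infty$, dominated convergence yields
\[
1=\lim_n\sum_{m\ge0}u_{k_n-m}\,r_m=U\sum_{m\ge0}r_m=U\,\E[l_{I_1}],
\]
and likewise $1=L\,\E[l_{I_1}]$. Hence $U=L=1/\E[l_{I_1}]$, so $\lim_k P_k=1/\E[l_{I_1}]$, as claimed.

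I expect the propagation step to be the main obstacle: turning ``$u_{k_n-j}\to U$ for admissible single steps $j$'' into ``$u_{k_n-m}\to U$ for all $m$'' requires the aperiodicity input $\gcd(l_2,\ldots,l_k)=1$ together with the numerical-semigroup fact, and the $\limsup/\liminf$ bookkeeping (including the uniform tail control via $r_J\to0$) must be done carefully enough to survive passing to the limit. An alternative route I would keep in reserve is a coupling proof: run the process started at $0$ against the stationary delayed renewal process whose first step has the equilibrium law $b_m=r_m/\E[l_{I_1}]$, for which the visit probability is identically $1/\E[l_{I_1}]$, and show the two almost surely share a common renewal point; after that point they can be coupled to coincide, giving $|P_k-1/\E[l_{I_1}]|\to0$. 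There the main obstacle is instead proving the coupling time is finite, which again rests on aperiodicity. For the write-up it would in any case suffice to invoke \cite{EFP49}.
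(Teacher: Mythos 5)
The paper offers no proof of this statement at all: it is quoted as a classical theorem of Erd\H{o}s--Feller--Pollard with citations to \cite{EFP49} and \cite{B53}, so your closing remark that invoking \cite{EFP49} would suffice is exactly what the authors do. Your sketch is the standard elementary proof of the discrete renewal theorem and is essentially sound: the renewal equation $u_k=\sum_j f_j u_{k-j}$ and the coverage identity $\sum_{m=0}^{k}u_{k-m}r_m=1$ with $\sum_m r_m=\E[l_{I_1}]$ are both correct, and your observation that aperiodicity comes for free in this setting --- $l_1=l_2$ gives $\gcd(l_2,\ldots,l_k)=\gcd(l_1,\ldots,l_k)=1$, so the step distribution's support generates all sufficiently large integers --- is a genuinely useful point, since the theorem as stated in isolation requires this hypothesis and the paper leaves it implicit in the standing assumptions of Section 2. (With finitely supported steps, your tail control via $r_J\to0$ is not even needed.) One bookkeeping wrinkle: propagation along the renewal equation yields $u_{k_n-s}\to U$ only for $s$ in the numerical semigroup generated by the support, hence for all $s\ge M_0$ but not literally for \emph{every} fixed $m\ge0$ as you assert; this is harmless, since replacing the subsequence $k_n$ by $k_n-M_0$ before applying dominated convergence in the coverage identity gives $1=U\,\E[l_{I_1}]$, and symmetrically $1=L\,\E[l_{I_1}]$, exactly as you conclude. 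The coupling argument you hold in reserve is likewise a standard correct route; either write-up would be a valid substitute for the citation.
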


We write $\tau(t)$ for the first time $n$ such that $X_n\le r^t\omega$.
This is clearly a stopping time.

\begin{lma}\label{lm:conditional}
There is a constant $C>0$ that depends only on the distribution of $l_{I_1}$ such that the following holds.
For all $t>C$, we have
\[
\E\Big[\frac{W_{\tau(t)}}{W_{\tau(t-C)}}\Big|I_1,\ldots,I_{\tau(t-C)}\Big]\le 1-C^{-1}(1-w_2(r^t\omega)). 
\]
\end{lma}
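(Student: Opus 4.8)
The plan is to bound the conditional expectation by conditioning on the $\s$-algebra $\mathcal G$ generated by $I_1,\dots,I_{\tau(t-C)}$ and exhibiting, with probability bounded below uniformly in $t$, a single forced step at which the telescoping ratio $W_{\tau(t)}/W_{\tau(t-C)}$ acquires the factor $w_2(r^t\omega)$, every other factor being at most $1$. I may assume $\omega>0$ (otherwise replace $\omega$ by $-\omega$, using $|\wh\mu(\omega)|=|\wh\mu(-\omega)|$). Write $L_n=l_{I_1}+\dots+l_{I_n}$, so that $X_n=r^{L_n}\omega$, the process $L_n$ is strictly increasing with increments at most $l_{\max}:=\max_j l_j$, and $\tau(t)=\min\{n:L_n\ge t\}$. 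Put $s_0:=L_{\tau(t-C)}$, so $t-C\le s_0<t-C+l_{\max}$, and let $\nu:=\min\{n:L_n\ge t-l_2\}$; as soon as $C>l_2+l_{\max}$ we get $s_0<t-l_2$, hence $\nu>\tau(t-C)$. I would then work with the event
\[
A:=\{L_\nu=t-l_2\}\cap\{I_{\nu+1}=2\}.
\]
On $A$ one has $L_{\nu+1}=(t-l_2)+l_2=t$, so, the walk being increasing, $\tau(t)=\nu+1$, and therefore
\[
\frac{W_{\tau(t)}}{W_{\tau(t-C)}}=\prod_{j=\tau(t-C)+1}^{\nu+1}w_{I_j}(X_j)\le w_2(X_{\nu+1})=w_2(r^t\omega),
\]
since each factor lies in $[0,1]$ and $X_{\nu+1}=r^{L_{\nu+1}}\omega=r^t\omega$. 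On $A^c$ I would only use $W_{\tau(t)}/W_{\tau(t-C)}\le1$. Combining,
\[
\E\Big[\tfrac{W_{\tau(t)}}{W_{\tau(t-C)}}\,\Big|\,\mathcal G\Big]\le 1-\P(A\mid\mathcal G)\,\big(1-w_2(r^t\omega)\big),
\]
so it remains to bound $\P(A\mid\mathcal G)$ below by a positive constant $c_0$ depending only on $l_1,\dots,l_k,p_1,\dots,p_k$, and then to take $C\ge c_0^{-1}$.

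For the lower bound on $\P(A\mid\mathcal G)$ I would use that, conditionally on $\mathcal G$, the increments after time $\tau(t-C)$ form a fresh i.i.d.\ sequence independent of $\mathcal G$. Thus $\P(L_\nu=t-l_2\mid\mathcal G)$ equals the probability that the walk $Y$ of Theorem~\ref{EFP} ever visits $t-l_2-s_0$, that is $P_{\,t-l_2-s_0}$ in the notation there; and on $\{L_\nu=t-l_2\}$ the index $I_{\nu+1}$ is still a fresh draw, so it equals $2$ with probability $\wt p_2=p_1+p_2$. Hence $\P(A\mid\mathcal G)\ge\wt p_2\,P_{\,t-l_2-s_0}$, and the integer $n:=t-l_2-s_0$ lies in the window $C-l_2-l_{\max}<n\le C-l_2$, which slides off to $+\infty$ as $C$ grows. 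This is the one place the hypothesis $l_1=l_2$ is used: it makes index $2$ belong to $\{2,\dots,k\}$ with $l_2=l_1$, so $\gcd(l_2,\dots,l_k)=\gcd(l_1,\dots,l_k)=1$ and Theorem~\ref{EFP} applies and yields $P_m\to 1/\E[l_{I_1}]>0$. Choosing $N_0$ with $P_m\ge\tfrac12(\E[l_{I_1}])^{-1}$ for all $m\ge N_0$ and then demanding $C>l_2+l_{\max}+N_0$ forces $n>N_0$, whence $\P(A\mid\mathcal G)\ge\wt p_2/(2\E[l_{I_1}])=:c_0>0$; enlarging $C$ once more so that $C\ge c_0^{-1}$ completes the argument.

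The main obstacle, as I see it, is the uniform-in-$t$ lower bound on the hitting probabilities $P_n$ as the window $n\in(C-l_2-l_{\max},C-l_2]$ recedes to infinity: this is precisely what the Erd\H{o}s--Feller--Pollard theorem delivers, and it forces $C$ to be large; it is also the reason the aperiodicity $\gcd(l_2,\dots,l_k)=1$, coming from $l_1=l_2$, is needed. Everything else is routine given that $L_n$ is increasing with bounded increments — that $\tau(t)=\nu+1$ on $A$, that the product over $[\tau(t-C)+1,\tau(t)]$ collapses to at most its single $w_2$-factor, and the overshoot bound $s_0<t-C+l_{\max}$.
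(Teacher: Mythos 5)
Your proposal is correct and follows essentially the same route as the paper: apply the Erd\H{o}s--Feller--Pollard theorem to the post-$\tau(t-C)$ walk to hit $t-l_2$ with probability bounded below, then use the fresh draw $I=2$ (probability $\wt p_2$) to land exactly at $t$ and pick up the single factor $w_2(r^t\omega)$, bounding all other factors by $1$. Your write-up is more careful about the overshoot window, the aperiodicity $\gcd(l_2,\ldots,l_k)=1$ coming from $l_1=l_2$, and the strong Markov step, but the argument and the choice of constants are the same.
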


\begin{proof}
We write $\wt Y_n=\log_r(X_n/\omega)$.
We first observe that $\wt Y_{\tau(t-C)}\le t-C+\max(l_i)$.
(Note that $r<1$, so $\log_r$ is a decreasing function, hence  $\wt Y_{\tau(t)}\ge t$ for all $t$.)
By Theorem \ref{EFP}, (applied with $Y_m=\wt Y_{\tau(t-C)+m}-\wt Y_{\tau(t-C)}$),
there is a constant $C_0$ depending only on the distribution of $l_{I_1}$
such that
\[
\P(\wt Y_n=t-l_2\text{ for some $n$}|I_1,\ldots,I_{\tau(t-C)})\ge C_0^{-1}
\]
provided $C-2\max(l_i)\ge C_0$.
In this case, we also have
\[
\P(\wt Y_{\tau(t)}=t\text{ and }I_{\tau(t)}=2|I_1,\ldots,I_{\tau(t-C)})\ge C_0^{-1}\wt p_2,
\]
which yields
\[
\P\Big[\frac{W_{\tau(t)}}{W_{\tau(t-C)}}\le\frac{W_{\tau(t)}}{W_{\tau(t-1)}}=w_2(r^t\omega)
\Big|I_1,\ldots,I_{\tau(t-C)}\Big]\ge C_0^{-1}\wt p_2.
\]
This proves the lemma, provided we choose $C$ sufficiently large so that $C\ge C_0+2\max(l_i)$
and $C^{-1}\le C_0^{-1}\wt p_2$.
\end{proof}

\begin{prop}\label{pr:quant}
Let
\[
u(\omega)=1-\frac{|p_1e(a_1\omega)+p_2e(a_2\omega)|}{p_1+p_2}.
\]
Then we have
\[
|\wh\mu(\omega)|\le\prod_{n\ge C}(1-C^{-1}u(r^n\omega)).
\]
\end{prop}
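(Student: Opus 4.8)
The plan is to combine the optional-stopping estimate of Lemma~\ref{lm:stopping} with an iteration of Lemma~\ref{lm:conditional}, and then to average over the $C$ distinct ``block decompositions'' of the walk. Since $|\wh\mu(X_\tau)|\le1$ for any stopping time $\tau$, Lemma~\ref{lm:stopping} already gives $|\wh\mu(\omega)|\le\E[W_\tau]$; I would apply this with the (bounded, since $\tau(t)\le t$) stopping time $\tau=\tau(s+NC)$ for each residue $s\in\{1,\dots,C\}$ and each $N\ge1$.

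For fixed such $s$ and $N$, I would write $W_{\tau(s+NC)}$ as the telescoping product $W_{\tau(s)}\prod_{j=1}^{N}W_{\tau(s+jC)}/W_{\tau(s+(j-1)C)}$; since $W$ is non-increasing we have $W_{\tau(s)}\le1$ and each ratio lies in $[0,1]$, so I can peel the factors off from $j=N$ downwards. Conditioning on $\mathcal{F}_{\tau(s+(j-1)C)}$ and using Lemma~\ref{lm:conditional} (valid since $s+jC>C$), the conditional expectation of the top ratio is at most $1-C^{-1}(1-w_2(r^{s+jC}\omega))$, which is a \emph{deterministic} number; iterating this yields
\[
|\wh\mu(\omega)|\le\E[W_{\tau(s+NC)}]\le\prod_{j=1}^{N}\bigl(1-C^{-1}(1-w_2(r^{s+jC}\omega))\bigr).
\]
From the definition of $w_2$ one has $1-w_2(r^{t}\omega)=u(r^{t-l_2}\omega)$, so this reads $|\wh\mu(\omega)|\le\prod_{j=1}^{N}(1-C^{-1}u(r^{s+jC-l_2}\omega))$ for every such $s$ and $N$.

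Letting $N\to\infty$ (the partial products are non-increasing), I obtain for each $s$ a bound $Q_s:=\prod_{j\ge1}(1-C^{-1}u(r^{s+jC-l_2}\omega))$ with $|\wh\mu(\omega)|\le Q_s$, and $Q_s$ involves only exponents in the single residue class $s-l_2\bmod C$. Hence $|\wh\mu(\omega)|\le\min_{1\le s\le C}Q_s\le\bigl(\prod_{s=1}^{C}Q_s\bigr)^{1/C}$. As $(s,j)$ runs over $\{1,\dots,C\}\times\Z_{>0}$ the exponent $s+jC-l_2$ runs bijectively over the integers $\ge C+1-l_2$, so $\prod_{s=1}^{C}Q_s=\prod_{m\ge C+1-l_2}(1-C^{-1}u(r^m\omega))$. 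Applying the elementary inequality $(1-x)^{1/C}\le 1-x/C$ for $x\in[0,1]$ term by term, and then enlarging the constant $C$ appropriately (to absorb both the exponent loss and the shift by $l_2$), gives the stated bound.

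Most of this is routine: the telescoping, the limit, and the index bookkeeping. The step I expect to be the real point is the averaging in the last paragraph: iterating Lemma~\ref{lm:conditional} along one fixed block decomposition only produces a factor $u(r^n\omega)$ for $n$ in an arithmetic progression of common difference $C$, and there seems to be no way to extract all intermediate levels from a single decomposition (conditioning on the walk up to a given level destroys the renewal-type lower bound used in Lemma~\ref{lm:conditional} for levels just above it). Running the argument along all $C$ shifted decompositions and recombining by the min/geometric-mean inequality repairs this, at the cost of replacing $C^{-1}$ by $C^{-2}$, which is immaterial since $C$ is an unspecified constant.
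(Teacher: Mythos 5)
Your proof is correct and follows essentially the same route as the paper: iterate the conditional bound of Lemma~\ref{lm:conditional} along each arithmetic progression of common difference $C$, then combine the $C$ shifted decompositions by taking the geometric mean (the paper does this by multiplying the bounds on $\E[W_{\tau(t)}]$ for $t=s,\dots,s+C_0-1$, which is the same device), and finally absorb the resulting exponent loss into the constant.
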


We note that
\[
u(\omega)\ge C^{-1}\|(a_1-a_2)\omega\|^2
\]
for a suitable constant depending on $p_1$ and $p_2$, which can be seen by expanding the function $e(\cdot)$
in Taylor series.
Therefore, Theorem \ref{MAIN} follows immediately form the proposition.

\begin{proof}[Proof of Proposition \ref{pr:quant}]
We denote by $C_0$ the constant $C$ in Lemma \ref{lm:conditional}, which can be chosen to be an integer.
We note that
\[
1-w_2(\omega)=u(r^{-l_2}\omega).
\]

Now for any $t>C_0$, we have
\begin{align*}
\E[W_{\tau(t)}]=&\E\Big[\E\Big[\frac{W_{\tau(t)}}{W_{\tau(t-C_0)}}\Big|I_1,\ldots,I_{\tau(t-C)}\Big]W_{\tau(t-C_0)}\Big]\\
\le& (1-C_0^{-1}u(r^{t-l_2}\omega))\E[W_{\tau(t-C_0)}].
\end{align*}
Iterating this inequality and noting that $\E[W_{\tau(t-C_0)}]\le 1$ always, we get
\[
\E[W_{\tau(t)}]\le\prod_{j=0}^{\lfloor t/C_0\rfloor-1}(1-C_0^{-1}u(r^{t-jC_0-l_2}\omega)).
\]
Now we fix some $s\in\Z_{>0}$ and multiply this inequality together for $t=s,\ldots,s+C_0-1$ and get
\[
\prod_{t=s}^{s+C_0-1}\E[W_{\tau(t)}]\le\prod_{j=C_0+1}^{s+C_0-1} (1-C_0^{-1}u(r^{j-l_2}\omega)).
\]

By Lemma \ref{lm:stopping} and $|\wh\mu(\omega)|\le 1$, we have
\[
|\wh\mu(\omega)|\le\E[W_{\tau(t)}],
\]
so
\[
|\wh\mu(\omega)|^{C_0}\le \prod_{j=C_0+1}^{s+C_0-1} (1-C_0^{-1}u(r^{j-l_2}\omega))
\]
for all $s$,
and the result follows with a suitable choice of $C$.
\end{proof}

\section{Digit changes}\label{sc:DC}

The purpose of this section is to prove Theorem \ref{AG2}.
We will prove the following lemma.
\begin{lma}\label{LIO}
	Let $\lambda>1$ be an algebraic number. Let $\alpha\in [1,\lambda]$ be a real number and $\alpha\notin\mathbb{Q}(\lambda)$. Then there are numbers $\e, H, C_1,C_2>0$ depending only on $\lambda$ such that if $n,K>H$ and
	\begin{equation}\label{eq:NDC}
	\max_{n\leq j\leq Kn} \{\|\alpha\lambda^j\|\}\leq \e
	\end{equation}
	then there is a number $\beta\in\mathbb{Q}(\lambda)$ with height at most $h(\b)\le C_1 n$ such that
	\[
	|\alpha-\beta|\leq \frac{1}{e^{C_2 K n}}.
	\]
\end{lma}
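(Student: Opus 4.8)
The plan is to convert the near-integrality hypothesis \eqref{eq:NDC} into an \emph{exact} algebraic relation: the integers nearest to the numbers $\alpha\lambda^j$ must form a segment of a single linear recurrence sequence whose characteristic polynomial is the minimal polynomial of $\lambda$, and $\beta$ can then be read off as its ``$\lambda$-coefficient''. Concretely, I would fix the primitive integer minimal polynomial $P(x)=a_dx^d+\cdots+a_0\in\mathbb{Z}[x]$ of $\lambda$, with (necessarily distinct) roots $\lambda=\lambda_1,\ldots,\lambda_d$, and set $\varepsilon<\big(2\sum_i|a_i|\big)^{-1}$. For each integer $j\in[n,Kn]$ I would choose $m_j\in\mathbb{Z}$ with $|m_j-\alpha\lambda^j|=\|\alpha\lambda^j\|\le\varepsilon$. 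Since $\sum_i a_i\lambda^i=0$, for $n\le j\le Kn-d$ the integer $\sum_{i=0}^d a_i m_{j+i}$ equals $\sum_{i=0}^d a_i(m_{j+i}-\alpha\lambda^{j+i})$, which has absolute value at most $(\sum_i|a_i|)\varepsilon<\tfrac12$ and hence is $0$. So $(m_j)$ obeys the recurrence $\sum_{i=0}^d a_i m_{j+i}=0$ on this range, and therefore (taking $H$ large enough that all index ranges occurring in the proof are non-empty)
\[
m_j=\sum_{i=1}^d c_i\lambda_i^{\,j}\qquad\text{for }n\le j\le Kn,
\]
where $(c_1,\ldots,c_d)$ is the unique solution of the Vandermonde system $\sum_i c_i\lambda_i^{\,n+s}=m_{n+s}$, $s=0,\ldots,d-1$.

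Next I would take $\beta=c_1$ and check the two required properties. For $c_1\in\mathbb{Q}(\lambda)$: any $\sigma\in\mathrm{Gal}(\overline{\mathbb{Q}}/\mathbb{Q}(\lambda))$ fixes $\lambda$ and the rational integers $m_j$, hence permutes $\{\lambda_2,\ldots,\lambda_d\}$; applying $\sigma$ to the displayed identity over $d$ consecutive values of $j$ and comparing coefficients (legitimate since the matrix $(\lambda_i^{\,j})_{j,i}$ over $d$ consecutive $j$ is invertible) forces $\sigma(c_1)=c_1$. For the height: writing the Vandermonde system as $\tilde M D\vec c=\vec m$ with $D=\mathrm{diag}(\lambda_i^{\,n})$ and $\tilde M_{s,i}=\lambda_i^{\,s}$, one gets $c_1=\lambda^{-n}\sum_s(\tilde M^{-1})_{1,s}\,m_{n+s}$; since $\alpha\le\lambda$ gives $|m_{n+s}|\le(\lambda+1)\lambda^{\,n+d}$, while $h(\lambda)$ and the entries of $\tilde M^{-1}$ depend only on $\lambda$, the sub-additivity and sub-multiplicativity of the absolute logarithmic height yield $h(c_1)\le C_1n$ for a suitable $C_1=C_1(\lambda)$.

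Finally I would bound $|c_1-\alpha|$ using the top end of the range. Put $e_j=m_j-\alpha\lambda^j$, so $|e_j|\le\varepsilon$ and $e_j=(c_1-\alpha)\lambda^{\,j}+\sum_{i=2}^d c_i\lambda_i^{\,j}$ on $[n,Kn]$. Evaluating this at the $d$ consecutive indices $j=Kn,Kn-1,\ldots,Kn-d+1$ exhibits $(e_{Kn-s})_{s=0}^{d-1}$ as the image of $\big((c_1-\alpha)\lambda^{Kn},\,c_2\lambda_2^{Kn},\ldots,c_d\lambda_d^{Kn}\big)$ under the Vandermonde matrix $(\lambda_i^{-s})_{s,i}$, whose inverse has entries bounded purely in terms of $\lambda$; inverting gives $|(c_1-\alpha)\lambda^{Kn}|\le C_3\varepsilon$, i.e.\ $|c_1-\alpha|\le C_3\varepsilon\,\lambda^{-Kn}$. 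Choosing $C_2=\tfrac12\log\lambda$ and $H$ large, depending only on $\lambda$, so that $C_3\varepsilon\,\lambda^{-Kn}\le e^{-C_2Kn}$ for all $n,K>H$, the lemma follows with $\beta=c_1$.

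I expect the crux — and the only genuinely substantive step — to be the passage from the approximate to the exact relation, i.e.\ recognizing that a long block of good integer approximations of $\alpha\lambda^j$ forces those integers to lie on one linear recurrence sequence with characteristic polynomial $P$. After that, the argument is linear algebra over $\mathbb{Q}(\lambda_1,\ldots,\lambda_d)$ together with routine height bookkeeping; the only points that will demand care are the Galois argument that $c_1\in\mathbb{Q}(\lambda)$ and verifying that $\varepsilon$, $H$, $C_1$, $C_2$ can be chosen independently of $\alpha$, $n$, $K$.
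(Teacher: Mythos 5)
Your proof is correct and follows essentially the same route as the paper: both derive the exact linear recurrence from the near-integrality hypothesis, identify the $\lambda$-eigencomponent via the inverse Vandermonde matrix (you attach the recurrence to the nearest integers $m_j$, the paper to the errors $\varepsilon_j$, which is the same computation since the two differ by $\alpha\lambda^j$), use the same Galois-conjugation argument to place $\beta$ in $\mathbb{Q}(\lambda)$, and extract the decay rate from the top end of the index range in the same way. No gaps.
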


We note that the lemma follows trivially from the aforementioned result of Bufetov and Solomyak \cite{BS14}*{Proposition 5.5}
when $\l$ is not a Pisot or Salem number.
Our proof is valid for all algebraic numbers.

For proving the above lemma, we need the following standard result. For completeness, we provide a detailed proof.

\begin{lma}\label{First Column}
	Let $\lambda$ be an algebraic number over $\mathbb{Q}$. Suppose that $\deg \lambda=d\geq 1.$ Let $\l_1=\l,\l_2\dots,\l_d$ be the Galois conjugates of $\l$. Consider the $d\times d$ Vandermonde matrix $A$ with entries
	\[
	A_{i,j}=\l^{j-1}_{i}, 1\leq i,j\leq d.
	\]
	Then $A$ is nonsingular, and its inverse $A^{-1}$ satisfies for each $i,j$ that
	\[
	(A^{-1})_{i,j}\in\mathbb{Q}(\l_j).
	\]
	In particular, the first column of $A^{-1}$ ($(A^{-1})_{i,j}, j=1$) has entries in $\mathbb{Q}(\lambda)$.
\end{lma}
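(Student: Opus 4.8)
The plan is to prove nonsingularity and the field-membership property by a direct Galois-equivariance argument. First I would recall that $A$ is a Vandermonde matrix with nodes $\l_1,\ldots,\l_d$ which are pairwise distinct (the conjugates of an algebraic number of degree $d$ are distinct), so $\det A = \prod_{i<j}(\l_j-\l_i)\neq 0$ and $A$ is invertible over $\C$. The key point is the action of the Galois group $G=\Gal(\overline{\Q}/\Q)$: each $\sigma\in G$ permutes the set $\{\l_1,\ldots,\l_d\}$, and I would fix the convention that $\sigma$ acts on the \emph{index set} via a permutation $\pi_\sigma\in S_d$ determined by $\sigma(\l_i)=\l_{\pi_\sigma(i)}$.

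Next I would show how $\sigma$ transforms $A$ and hence $A^{-1}$. Applying $\sigma$ entrywise to $A$, the $(i,j)$ entry $\l_i^{j-1}$ becomes $\l_{\pi_\sigma(i)}^{j-1}$, so $\sigma(A)=P_\sigma A$ where $P_\sigma$ is the permutation matrix with $(P_\sigma)_{i,i'}=1$ iff $i'=\pi_\sigma(i)$; note the columns are untouched since the exponents $j-1$ are rational integers fixed by $\sigma$. Consequently $\sigma(A^{-1})=A^{-1}P_\sigma^{-1}$, which at the level of entries reads
\[
\sigma\big((A^{-1})_{i,j}\big) = (A^{-1})_{\pi_\sigma(i),\,j}.
\]
Now I would isolate the effect on a fixed column $j$: take any $\sigma$ in the stabilizer of $\l_j$, i.e. $\sigma\in\Gal(\overline{\Q}/\Q(\l_j))$. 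For such $\sigma$ we have $\pi_\sigma(j)=j$, but that is not yet what we want; rather, to conclude $(A^{-1})_{i,j}\in\Q(\l_j)$ I need $\sigma\big((A^{-1})_{i,j}\big)=(A^{-1})_{i,j}$ for all $\sigma$ fixing $\l_j$. This forces a relabelling: the natural statement is that the entry in row corresponding to the node $\l_j$ of column $j$ — more precisely, one should track which row index is ``attached'' to $\l_j$.

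Here lies the main subtlety, and I would resolve it by noting that in the Vandermonde set-up the roles of rows and columns are asymmetric: $\sigma$ permutes the rows (indexed by the nodes $\l_i$) but fixes the columns (indexed by the exponents). Therefore $\sigma((A^{-1})_{i,j}) = (A^{-1})_{\pi_\sigma(i),j}$ shows that the column vector $\big((A^{-1})_{i,j}\big)_{i=1}^d$ is permuted by $\sigma$ exactly as the nodes are; in particular, if $\sigma$ fixes $\l_j$ then $\sigma$ fixes that subset of entries of the $j$-th column whose row indices lie in the $\pi_\sigma$-orbit of $j$, but to get \emph{every} entry of the column I should instead argue per-entry by composing with the permutation that moves $i$ to the slot $j$. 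The cleanest route: for each $i$, choose — this is where I expect to have to be careful — the statement $(A^{-1})_{i,j}\in\Q(\l_j)$ is equivalent to showing that $(A^{-1})_{i,j}$ is fixed by $\Gal(\overline{\Q}/\Q(\l_j))$, and from the displayed transformation law, $\sigma\in\Gal(\overline{\Q}/\Q(\l_j))$ gives $\sigma((A^{-1})_{i,j})=(A^{-1})_{\pi_\sigma(i),j}$; since $\sigma$ also fixes $\l_j$, one has $\pi_\sigma(j)=j$, so choosing to read off \emph{row $j$} we get $\sigma((A^{-1})_{j',j})$ stays within the column, and by a Lagrange-interpolation formula — writing $A^{-1}$ explicitly via the Lagrange basis polynomials $L_i(x)=\prod_{m\neq i}\frac{x-\l_m}{\l_i-\l_m}$, whose coefficients are the entries of column... — one sees each $(A^{-1})_{i,j}$ is a symmetric function of $\{\l_m : m\neq j\}$ with coefficients in $\Q(\l_j)$. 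I would therefore finish by the \textbf{explicit Lagrange-interpolation computation}: the inverse of the Vandermonde matrix has entries given by elementary symmetric polynomials in the nodes other than one distinguished node, divided by $\prod_{m\neq i}(\l_i-\l_m)$; reading off the correct index shows $(A^{-1})_{i,j}$ is a polynomial expression in $\l_j$ with rational coefficients, hence lies in $\Q(\l_j)$. The ``In particular'' clause is then the case $j=1$, since $\l_1=\l$ and $\Q(\l_1)=\Q(\l)$. The main obstacle is purely bookkeeping: matching the Galois action on nodes with the correct row/column index so that the fixed-field argument lands on the intended entry; the explicit Lagrange formula sidesteps any ambiguity and I would lean on it for the final write-up.
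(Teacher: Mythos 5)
Your proposal ends up at a correct proof, but by a genuinely different route from the paper's, and the detour in the middle contains a fixable error in the transformation law that caused all of your bookkeeping trouble. From $\sigma(A)=P_\sigma A$ you get $\sigma(A^{-1})=A^{-1}P_\sigma^{-1}$, and right multiplication by a permutation matrix permutes the \emph{columns} of $A^{-1}$, not the rows: the correct law is $\sigma\bigl((A^{-1})_{i,j}\bigr)=(A^{-1})_{i,\pi_\sigma(j)}$, not $(A^{-1})_{\pi_\sigma(i),j}$. With the correct law the equivariance argument closes instantly: any $\sigma$ fixing $\lambda_j$ has $\pi_\sigma(j)=j$ and therefore fixes \emph{every} entry of the $j$-th column of $A^{-1}$, so that column lies in the fixed field $\Q(\lambda_j)$; no per-entry relabelling or composing of permutations is needed. (One should work inside the Galois closure $E$ of $\Q(\lambda)$ and use, as the paper does via orbit--stabilizer, that the fixed field of the stabilizer of $\lambda_j$ in $\Gal(E/\Q)$ is exactly $\Q(\lambda_j)$.)

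The Lagrange-interpolation argument you fall back on for the final write-up is sound and is not the paper's proof: the paper instead applies Cramer's rule, $(A^{-1})_{i,j}=\det(A)^{-1}C_{i,j}$, and checks that a $\sigma$ fixing $\lambda_j$ multiplies $\det(A)$ and the cofactor $C_{i,j}$ by the same signature, so their ratio is $G_j$-invariant. Your route is more explicit and avoids the signature computation entirely; the one step it requires you to write out is why the coefficients of $L_j(x)=\prod_{m\neq j}(x-\lambda_m)\big/\prod_{m\neq j}(\lambda_j-\lambda_m)$ lie in $\Q(\lambda_j)$, since a priori they are symmetric functions of the \emph{other} conjugates. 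This follows from $f(x):=\prod_m(x-\lambda_m)\in\Q[x]$: the numerator equals $f(x)/(x-\lambda_j)$, whose coefficients are polynomials in $\lambda_j$ over $\Q$ by synthetic division, and the denominator equals $f'(\lambda_j)\in\Q(\lambda_j)\setminus\{0\}$ because the roots are simple. The identification of the $j$-th column of $A^{-1}$ with the coefficient vector of $L_j$ is exactly the relation $\sum_{m}A_{i,m}(A^{-1})_{m,j}=\delta_{ij}$, i.e. $L_j(\lambda_i)=\delta_{ij}$, and nonsingularity is the standard Vandermonde determinant with distinct nodes, as you say.
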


\begin{proof}
	The fact that $A$ is nonsingular follows from the fact that $\l_1,\dots,\l_d$ are distinct.
	
We write $G=\Gal(E/\Q)$, where $E$ is the Galois closure of $\Q(\l)$. 
We write $G_i$ for the subgroup of $G$ consisting of those elements that fix $\lambda_i$.
The fixed field of $G_i$ certainly contains $\Q(\l_i)$.
On the other hand $[G:G_i]=d=[\Q(\l_i):\Q]$ by the orbit-stabilizer theorem, so the fixed field of $G_i$
equals $\Q(\l_i)$.
This means that an element $x\in E$ is in $\Q(\l_j)$ if and only if $\s(x)=x$ for all $\s\in G_j$.

We fix $\s\in G_j$ and set out to prove
\[
\s((A^{-1})_{i,j})=(A^{-1})_{i,j}.
\]
	By Cramer's rule, we know that
	\[
	(A^{-1})_{i,j}=\det(A)^{-1} C_{i,j},
	\]
	where $C_{i,j}=(-1)^{i+j}\det(A^*(j,i))$ and $A^*(j,i)$ is the $(j,i)$-minor of $A$, i.e. it is the matrix formed by the entries of $A$ after deleting the $i$-th column and the $j$-th row.  
	
	As $\det(A)$ is a polynomial over the entries of $A$, we see that
	\[
	\sigma(\det(A))=\det(\sigma(A)),
	\]
	where $\sigma(A)$ is the matrix $\sigma(A)_{i,j}=\sigma(A_{i,j})$.
	Note that $\sigma$ acts as a permutation on $\{\l_1,\dots,\l_d\}$ fixing $\l_j$.
	We see that $\sigma(A)$ is obtained by permuting the rows of $A$. Thus we have
	\[
	\sigma(\det(A))=\mathrm{sign} (\sigma) \det(A),
	\]
	where $\mathrm{sign}(\sigma)\in\{\pm 1\}$ is the parity of $\sigma$ as a permutation on $\{\l_1,\dots,\l_d\}$.
	
	Since $\sigma$ fixes $\l_j$, we see that the parity of $\sigma$ as a permutation on the subset $\{\l_1,\dots,\l_d\}\setminus \{\l_j\}$ is equal to $\mathrm{sign}(\sigma)$. From this observation, we see that
	\[
	\sigma(C_{i,j})=\mathrm{sign}(\sigma) C_{i.j}.
	\]
	Thus, we have
	\[
	\sigma((A^{-1})_{i,j})=\sigma(\det(A))^{-1}\sigma(C_{i,j})=\det(A)^{-1} C_{i,j}=(A^{-1})_{i,j},
	\]
as required.
This completes the proof of the lemma.
\end{proof}

\begin{proof}[Proof of Lemma \ref{LIO}]
	For each $j\in \{n,\dots,Kn\}$, we write
	\begin{equation}\label{eq:Keps}
	\alpha \lambda^j=K_j+\e_j,
	\end{equation}
	where $K_j$ is an integer and $|\e_j|\leq 1/2$.
	By our assumption \eqref{eq:NDC}, we have $|\e_j|\leq\e$ for all $j$.
	
	Since $\lambda$ is an algebraic number, we have
	\[
	\sum_{j=0}^d c_j\lambda^j=0,
	\]
	where $c_0,\dots,c_d$ are integers.
	Let $n',n'+1,\dots,n'+d$ be $d+1$ consecutive integers in $\{n,\dots,Kn\}$. 
	An appropriate linear combination of the equations \eqref{eq:Keps} gives
	\[
	0=\alpha \lambda^{n'} \sum_{j=0}^d c_j\lambda^j=\sum_{j=0}^d c_j K_{n'+j}+\sum_{j=0}^d c_j \e_{n'+j}.
	\]
	Assuming
	\[
	\e_{n'+j}\leq\e<(|c_0|+\ldots+|c_d|)^{-1},
	\]
	as we may, we then have
	\[
	\Big|\sum_{j=0}^d c_j \e_{n'+j}\Big|<1.
	\]
	This forces
	\[
	\sum_{j=0}^d c_j K_{n'+j}=0
	\]
	since the LHS is an integer.
	This in turn implies that
	\[
	\sum_{j=0}^d c_j \e_{n'+j}=0.
	\]

We can apply the above argument for every $d+1$ consecutive integers.
We see that $\{\e_{n+i}\}_{i=0,\dots,(K-1)n}$ forms a linear recurrence sequence
with coefficients $c_0,\dots,c_d$, which are integers.
As a well known fact, we see that there are complex numbers $b_1,\dots,b_d$
and $z_1,\dots,z_d$ such that
\[
\e_{n+i}=\sum_{j=1}^d b_j z^i_j.
\]
The complex numbers $z_1=\lambda,\dots,z_d$ are precisely the Galois conjugates of $\lambda$.

Let $A$ be a $d\times d$ matrix with entries $A_{j,m}:=z_j^{m-1}$.
Then we can write
\[
\e_{n+i+(m-1)}= \sum_{j=1}^d b_j z^i_j\cdot A_{j,m},
\]
and hence
\[
(\e_{n+i},\dots,\e_{n+i+d-1})=(b_1z_1^i,\ldots,b_dz_d^i) A
\]
for $i\in\{0,\dots,(K-1)n-d+1\}$.
Note that $A$ is not singular by Lemma \ref{First Column}.
Therefore, we can write
\[
(b_1z_1^{(K-1)n-d+1},\ldots,b_dz_d^{(K-1)n-d+1})=(\e_{Kn-d+1},\dots,\e_{Kn})A^{-1}
\]
and conclude that
\begin{equation}\label{eq:b1}
|b_1|<C|\l|^{-(K-1)n}
\end{equation}
for a constant $C$ that depends only on $\l$.
(Recall that $z_1=\l$.)

We can also write
\begin{equation}\label{eq:b1bd}
(b_1,\ldots,b_d)=(\e_n,\ldots,\e_{n+d-1})A^{-1}.
\end{equation}
Recall that $\e_i=\a\l^{n+i}-K_i$.
Observe that
\[
(1,0,\ldots,0)A=(1,\ldots,\l^{d-1}),
\]
hence
\[
(\a\l^{n},\ldots,\a\l^{n+d-1})A^{-1}=(\a\l^n,0,\ldots,0).
\]
We plug in these two facts in \eqref{eq:b1bd} and get
\[
(b_1,\ldots,b_d)=(\a\l^n,0,\ldots,0)-(K_n,\ldots, K_{n+d-1})A^{-1}.
\]

Now we set $\b$ such that $\b\l^n$ is the first entry of the vector $$(K_n,\ldots, K_{n+d-1})A^{-1}.$$
By Lemma \ref{First Column}, we see that the entries of the first column of $A^{-1}$ are in $\Q(\l)$ and they depend only on $\l$.
The numbers $K_n,\ldots, K_{n+d-1}$
are integers of absolute value at most $\l^{n+d}+1$.
Here we used that $\a\in[1,\l]$.
This means that $\b\in\Q(\l)$ and $h(\b)\le C_1 n$ for some constant $C_1$ depending only on $\l$.
Finally, we observe that
\[
|\a-\b|=\frac{|b_1|}{\l^n},
\]
and the claim of the lemma follows from \eqref{eq:b1}.
\end{proof}

\begin{proof}[Proof of Theorem \ref{AG2}]
Let $\lambda,\gamma$ be as given in the statement. Remember that $\g\notin\mathbb{Q}(\lambda)$.
Let $\e, H,C_1,C_2$ be as in Lemma \ref{LIO}.

For each $x>e^e$, we define $x_*$ to be the number in $(1,\lambda]$ with
\[
\log(x/x_*)/\log \lambda\in\mathbb{Z}.
\]
Let $K>H$ be an integer, and define the sequence $n_1=H+1$, $n_{j+1}=Kn_j+1$ for $j\ge 2$.
Write $\cA(x)$ for the set of integers $j\ge1$ such that $x_*\l^{n_{j+1}-1}\le x$ and \eqref{eq:NDC} is not
satisfied for $\alpha=x_*$, $K$ and all $n=n_j$.
Then it is straightforward that
\[
\DC(x,\lambda,\e)\ge |\cA(x)|.
\]

Write $N$ for the largest $j$ so that both $x_*\l^{n_{j+1}-1}\le x$ and $(\g x)_*\l^{n_{j+1}-1}\le \g x$ holds.
Note that $N\ge c\log\log x$ for a suitable constant $c>0$.
Now assume for contradiction that the conclusion of the theorem fails, with a constant $C$ small enough so that
\[
|\cA(x)|+|\cA(\g x)|\le \DC(x,\lambda,\e)+\DC(x,\lambda,\e)< N
\]
and with an $\e$ small enough so that Lemma \ref{LIO} applies.
This means that there is some $j$ such that $n_j\notin \cA(x)\cup\cA(\g x)$ but both both $x_*\l^{n_{j+1}-1}\le x$ and $(\g x)_*\l^{n_{j+1}-1}\le \g x$ holds.
Therefore, we can apply Lemma \ref{LIO} for $x_*$ and $(\g x_*)$ in place of $\a$ and $n=n_j$.
We conclude that there are $\b_1$ and $\b_2$ of height at most $C_1n$ such that
\[
x_*=\beta_1+O(1/e^{C_2Kn}), (\gamma x)_*=\beta_2+O(1/e^{C_2Kn}).
\]
We note that $\g=\l^k(\g x)_*/x$ for a suitable $k$, which is bounded in terms of $\g$ and $\l$, and
\[
\gamma=\l^k\frac{\beta_2+O(1/e^{C_2Kn})}{\beta_1+O(1/e^{C_2Kn})}=\l^k\frac{\beta_2}{\beta_1}+O(1/e^{C_2Kn}).
\]
Since $K$ is arbitrary, this shows that $\gamma$ is Liouville over $\Q(\l)$, a contradiction.
\end{proof}

\section{Applications of Theorem \ref{MAIN}}\label{sc:appl}

In this section, we deduce the various corollaries of Theorem \ref{MAIN},
which we claimed in Section \ref{sc:results}.
We first give a lemma that constructs an IFS to which Theorem \ref{MAIN} can be applied.

\begin{lma}\label{lm:newIFS}
Let $r_1=r^{l_1},\ldots,r_k=r^{l_k}$ for some $r\in(0,1)$ and $l_1,\ldots,l_k\in\Z_{>0}$ with $\gcd(l_1,\ldots,l_k)=1$.
Let $a_1,\ldots,a_k\in\R$.
Assume $a_0=0$.
Let $\mu$ be the self-similar measure associated to the IFS
\[
\{f_j:x\mapsto r_j x+a_j|j=1,\ldots,k\}
\]
and a positive probability vector $p_1,\ldots, p_k$.
Then for all $j$, there is some $k'\in\Z_{\ge 2}$, $l_1',\ldots, l_{k'}'\in\Z_{>0}$ with $\gcd(l_1',\ldots,l'_{k'})=1$
and $a_1',\ldots,a_{k'}'\in\R$,
such that $l_1'=l_2'$, $a_1'-a_2'=ba_j$ for some $0\neq b\in\Q(r)$ and $\mu$ is the self-similar measure associated to the IFS
\[
\{f_j':x\to r^{l_j'}+a_j'|j=1,\ldots,k'\}
\]
and a positive probability measure.
\end{lma}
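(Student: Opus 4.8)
The plan is to build the new IFS by composing the maps of the original one. The key observation is that if $\mu$ satisfies $\mu=\sum_i p_i f_i\mu$, then iterating the relation once gives $\mu=\sum_{i,i'} p_ip_{i'} (f_i\circ f_{i'})\mu$, and more generally $\mu$ is the self-similar measure for the IFS $\{f_{i_1}\circ\cdots\circ f_{i_m}\}$ indexed by words of length $m$, with probabilities $p_{i_1}\cdots p_{i_m}$. The composition $f_{i_1}\circ\cdots\circ f_{i_m}$ has contraction ratio $r^{l_{i_1}+\cdots+l_{i_m}}$ and some translation in $\R$ (in $\Q(r)$-span of the $a_i$ if the $a_i$ are, but in general just real). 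So I would look for two words $w$ and $w'$ of the same total length $L=\sum l$, i.e. two compositions with equal contraction ratio $r^L$, chosen so that the difference of their translation parts is a nonzero $\Q(r)$-multiple of $a_j$.

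First I would fix the target index $j$ and note that the translation part of $f_{i_1}\circ\cdots\circ f_{i_m}$ is $a_{i_1}+r^{l_{i_1}}a_{i_2}+r^{l_{i_1}+l_{i_2}}a_{i_3}+\cdots$. Using $a_1=0$ (the normalization $a_0=0$ in the statement should read $a_1=0$), words that begin with the index $1$ contribute nothing from their first letter. The cleanest choice: compare the length-$(l_1+l_j)$ word $(1,j)$ giving translation $a_1 + r^{l_1}a_j = r^{l_1}a_j$, against the length-$(l_1+l_j)$ word $(j,1^{?})$ — but lengths must match exactly, so I would instead pad. Concretely, pick a word $w$ realizing contraction $r^{L}$ with translation $t_w$ and another word $w'$ of the same length $L$ with translation $t_{w'}$ such that $t_w - t_{w'} = r^{s}a_j$ for some $s\in\Z_{\ge 0}$; since $r^{l_1}$ appears as a ratio, and since we may freely prepend copies of $f_1$ (which only multiply the translation by a power of $r$ and add nothing), the existence of such a pair reduces to a short combinatorial argument. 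Then I take $\mathcal F'$ to consist of these two length-$L$ words together with enough additional length-$L$ words so that the collection of all length-$L$ words is used (so that $\mu$ really is the associated self-similar measure); the new exponents $l'$ are the word-lengths, all equal to $L$ except we must ensure $\gcd=1$ — this is where a little care is needed.

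To fix the gcd, I would not take all words of a single length $L$ (whose lengths are all $L$, gcd $L$), but rather augment: include also all words of length $L+1$, or more simply, observe that we are free to replace the IFS $\{f_i\}$ first by $\{f_i\circ f_{i'}\}$ over length-two words and note $\gcd$ of the new length-sums $\{l_i+l_{i'}\}$ is still achievable to be $1$ because $\gcd(l_1,\ldots,l_k)=1$ implies $\gcd$ of pairwise sums is $1$ or $2$, and one can break parity by also including some length-changed copies. The robust route: take $\mathcal F'$ to be the set of \emph{all} words $w$ over $\{1,\ldots,k\}$ of length between $M$ and $M+N$ for suitable $M,N$ — no, that over-counts $\mu$. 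The correct bookkeeping is to take a \emph{prefix-free} (antichain) set of words whose probabilities sum to $1$; any finite maximal antichain in the word tree works and gives $\mu$ as its self-similar measure. So I would choose a maximal antichain $\mathcal W$ containing two words of equal length whose translations differ by $r^s a_j$, set $l'_w=|w|$, $a'_w=t_w$, relabel so the two special words are indices $1,2$ with $l'_1=l'_2$, and arrange $\gcd(l'_w)=1$ by, if necessary, replacing one word $w\in\mathcal W$ by its two length-$(|w|+1)$ children (which keeps the antichain maximal and introduces a consecutive pair of lengths, forcing gcd $1$).

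The main obstacle I anticipate is the simultaneous satisfaction of all three requirements — $l'_1=l'_2$, $a'_1-a'_2\in\Q(r)^\times\cdot a_j$, and $\gcd(l'_1,\ldots,l'_{k'})=1$ — since each is easy alone but they interact: equalizing two lengths pushes toward a common length (bad for gcd), and the translation-difference condition constrains which two words one may equalize. I expect the resolution to be the antichain construction above: start from two conveniently chosen words (e.g. $(1,j)$ and $(j,1)$ if $l_1=l_j$, or padded analogues otherwise) whose translation difference is visibly $\pm r^{l_1}a_j \ne 0$, complete to a maximal antichain, and then perform a local surgery (splitting one leaf into its children) to repair the gcd without disturbing the two special leaves. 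Verifying that $b\in\Q(r)$ and $b\ne0$ is then immediate from the explicit formula for $t_w$, since all $r^m\in\Q(r)$ and $a_j\ne0$ may be assumed (if $a_j=0$ the statement is vacuous or trivial).
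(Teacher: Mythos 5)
Your overall strategy is the same as the paper's: iterate the self-similarity relation so that $\mu$ becomes the self-similar measure of an IFS indexed by words, and look for two words with equal contraction exponent whose translation parts differ by a nonzero $\Q(r)$-multiple of $a_j$. Two of your worries are unfounded: the words $(1,j)$ and $(j,1)$ \emph{always} have the same contraction ratio $r^{l_1+l_j}$ (exponents add commutatively), so no ``padding'' is needed to equalize them; and note that the new exponents must be $l'_w=\sum_i l_{w_i}$, not the word length $|w|$. The genuine gap is in your gcd repair. Splitting a leaf $w$ into its $k$ children replaces the exponent $L_w$ by $L_w+l_1,\dots,L_w+l_k$; these are not consecutive integers, and the operation does not force the gcd to become $1$. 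Concretely, take $k=2$, $l_1=1$, $l_2=3$ and the maximal antichain $\{(1),(2,1),(2,2)\}$ with exponents $\{1,4,6\}$; splitting the leaf $(1)$ yields exponents $\{2,4,4,6\}$ with gcd $2$. So the single ``local surgery'' you propose can fail, and your argument never actually establishes that all three constraints ($l_1'=l_2'$, the translation condition, and $\gcd=1$) can be met simultaneously --- which you correctly identify as the crux.

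The paper resolves exactly this tension with a short arithmetic trick: if $l_1\ne l_j$, pick a prime $p\nmid l_j-l_1$ and take the IFS of \emph{all} words of length $p$. The exponent set then contains $pl_i$ for every $i$, so its gcd divides $p\gcd(l_1,\dots,l_k)=p$ and is therefore $1$ or $p$; but it also contains $(p-1)l_1+l_j\equiv l_j-l_1\not\equiv 0\pmod p$, so the gcd is $1$. The two distinguished length-$p$ words $f_1^{\circ(p-1)}\circ f_j$ and $f_j\circ f_1^{\circ(p-1)}$ have equal contraction exponent $(p-1)l_1+l_j$ and translation difference $(r^{(p-1)l_1}-1)a_j$, which is the required $ba_j$ with $b\in\Q(r)\setminus\{0\}$ (using $a_1=0$). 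If $l_1=l_j$ one simply relabels. Your antichain framework could likely be salvaged, but as written the key step is unjustified and, in the form stated, false.
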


\begin{proof}
If $l_1=l_j$, then it is enough to relabel the IFS exchanging the indices $2$ and $j$.

If $l_1\neq l_j$, then let $p$ be a prime such that $p\nmid l_j-l_1$, and consider the
IFS
\begin{equation}\label{eq:newIFS}
\{f_{i_1}\circ\ldots \circ f_{i_p}:i_1,\ldots,i_p\in\{1,\ldots,k\}\}
\end{equation}
and the positive probability vector
\[
\{p_{i_1}\cdots p_{i_p}:i_1,\ldots,i_p\in\{1,\ldots,k\}\}.
\]
Clearly, $\mu$ is the self-similar measure associated to this data.

The numbers
\[
\{l_{i_1}+\ldots+l_{i_p}:i_1,\ldots,i_p\in\{1,\ldots,k\}\}
\]
include $pl_i$ for $i=1,\ldots,k$, so their $\gcd$ is either $p$ or $1$.
However, $p\nmid(p-1)l_1+l_j$, so the $\gcd$ must be 1.

We label the maps in the new IFS \eqref{eq:newIFS} in such a way that the first two maps are
$f_1^{\circ(p-1)}\circ f_j$ and $f_2\circ f_1^{\circ(p-1)}$.
These clearly have the same contraction factor and the difference between their translation components is
\[
r^{(p-1)l_1}a_j-a_j,
\]
which is of the required form.
\end{proof}

\subsection{Theorem \ref{MAIN} implies Theorem \ref{th:B}}

We consider an IFS with contraction ratios $r_i=r^{l_i}$ for some $r\in(0,1)$, $l_1,\ldots,l_k\in\Z_{>0}$ with $\gcd(l_1,\ldots,l_k)=1$
as in Theorem \ref{th:B}.
By translating the coordinate system, we can assume that $0$ is the fixed point of the first map in the IFS, that is $a_0=0$.
We assume that the self-similar measure $\mu$ is not Rajchman and we aim to show that $r^{-1}$ is Pisot and the
IFS can be conjugated to a from such that $a_j\in\Q(\l)$ for all $j$.

Our proof is based on the following classical theorem of Pisot.

\begin{thm}[Pisot, \cite{B}*{Theorem 2.1}]\label{Pisot}
	Let $\lambda>1$ and $b\neq 0$ be real numbers such that
	\[
	\sum_{j=0}^{\infty} \|ba^j\|^2<\infty.
	\]
	Then $\lambda$ is a Pisot number and $b\in\mathbb{Q}(\lambda)$.
\end{thm}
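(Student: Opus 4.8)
I would follow Pisot's classical argument, which has three steps: encode the hypothesis in a generating function, show that this function is rational, and then read off the conclusion from the theorems of Fatou and Kronecker. For the \emph{setup}, write $b\lambda^n=u_n+\varepsilon_n$ with $u_n\in\Z$ and $|\varepsilon_n|=\|b\lambda^n\|$, so that $\sum_n\varepsilon_n^2<\infty$ and in particular $\varepsilon_n\to0$. Put $F(z)=\sum_{n\ge0}u_nz^n$; for $|z|<\lambda^{-1}$ this converges and equals $\frac{b}{1-\lambda z}-G(z)$, where $G(z)=\sum_n\varepsilon_nz^n$. Since the Taylor coefficients of $G$ are square-summable, $G$ lies in $H^2$ of the unit disc (holomorphic there, with uniformly bounded $L^2$ means on circles $|z|=\rho<1$). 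Hence $F$ extends to a function meromorphic on the open unit disc whose only pole there is the simple pole at $\lambda^{-1}$, and $G=\frac{b}{1-\lambda z}-F$ belongs to $H^2$. The goal is to prove that $F$ is rational.

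For the \emph{rationality}, I would use Hankel determinants together with a tail estimate. Using $\sum_n\varepsilon_n^2<\infty$, fix $M$ so large that $S_M:=\sum_{n\ge M}\varepsilon_n^2<\tfrac12\lambda^{-4}$. For $N\ge1$ consider the integer $D_N:=\det(u_{M+i+j})_{0\le i,j\le N}$. Writing the underlying matrix as $b\lambda^M vv^{\mathsf T}-E$ with $v=(1,\lambda,\dots,\lambda^N)^{\mathsf T}$ and $E=(\varepsilon_{M+i+j})_{0\le i,j\le N}$, and using the rank-one update identity $\det(A+xy^{\mathsf T})=\det A+y^{\mathsf T}\mathrm{adj}(A)x$, one gets $D_N=\det(-E)+b\lambda^M\,v^{\mathsf T}\mathrm{adj}(-E)v$. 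The sum of the squares of the entries of $E$ is at most $(N+1)S_M$, so Hadamard's inequality gives $|\det E|\le S_M^{(N+1)/2}$ and bounds every $N\times N$ minor of $E$ — hence every entry of $\mathrm{adj}(-E)$ — by $(2S_M)^{N/2}$; since $|v^{\mathsf T}\mathrm{adj}(-E)v|\le\big(\sum_{p=0}^N\lambda^p\big)^2(2S_M)^{N/2}$, this yields
\[
|D_N|\ \le\ S_M^{(N+1)/2}+\frac{|b|\lambda^{M+2}}{(\lambda-1)^2}\,\big(\lambda^2\sqrt{2S_M}\,\big)^{N}\ \longrightarrow\ 0
\]
by the choice of $M$. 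Since $D_N\in\Z$, $D_N=0$ for all large $N$, and since $D_0=u_M\ne0$, Kronecker's theorem on Hankel determinants produces a finite linear recurrence for $(u_{M+n})_n$, so $F$ is rational. This estimate is the heart of the matter and the step I expect to be the main obstacle: because the vector $v$ has entries as large as $\lambda^N$, one cannot afford to bound the minors of $E$ by $\sup_{k\ge M}|\varepsilon_k|$ and must instead use that the \emph{whole} tail $\ell^2$-mass $S_M$ is below the fixed threshold $\tfrac12\lambda^{-4}$ — this is exactly where $\sum\|b\lambda^n\|^2<\infty$, rather than merely $\|b\lambda^n\|\to0$, is used, consistent with the fact that the weaker hypothesis does not force $\lambda$ to be Pisot.

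For the \emph{conclusion}: $F$ is rational with integer Taylor coefficients, so by Fatou's theorem $F=P/Q$ with coprime $P,Q\in\Z[z]$ and $Q(0)=1$. Moreover $G=\frac{b}{1-\lambda z}-F$ is rational and lies in $H^2$, hence has no pole in the closed unit disc; therefore every root of $Q$ other than $\lambda^{-1}$ has modulus $>1$, and we may write $Q(z)=(1-\lambda z)R(z)$ with $R\in\Q(\lambda)[z]$ (division of $Q\in\Q(\lambda)[z]$ by the linear factor $1-\lambda z$, which is exact, with $R(\lambda^{-1})\ne0$). The reciprocal polynomial $z^{\deg Q}Q(1/z)$ is monic with integer coefficients and has $\lambda$ as a root, so $\lambda$ is an algebraic integer; every conjugate of $\lambda$ is another root of it and hence has modulus $<1$, i.e., $\lambda$ is a Pisot number. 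Finally $b=\lim_{z\to\lambda^{-1}}(1-\lambda z)F(z)=P(\lambda^{-1})/R(\lambda^{-1})\in\Q(\lambda)$, which completes the argument.
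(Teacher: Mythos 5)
The paper does not prove this statement: it is quoted as a known classical result, with a citation to \cite{B}*{Theorem 2.1}, and is then used as a black box in Section \ref{sc:appl}. So there is no in-paper proof to compare against. Your argument is the standard classical proof of Pisot's theorem (essentially the one in Salem's \emph{Algebraic Numbers and Fourier Analysis} and in \cite{B}): encode $u_n=b\lambda^n-\e_n$ in the generating function $F(z)=\sum u_nz^n=\frac{b}{1-\lambda z}-G(z)$ with $G\in H^2$, kill the shifted Hankel determinants $D_N$ using Hadamard's inequality and the smallness of the $\ell^2$ tail $S_M$, invoke Kronecker to get rationality and Fatou to normalize $F=P/Q$ with $P,Q\in\Z[z]$, $Q(0)=1$, and then read off the Pisot property from the location of the zeros of $Q$ and $b=P(\lambda^{-1})/R(\lambda^{-1})\in\Q(\lambda)$ from the residue at the simple pole $\lambda^{-1}$. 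The details check out: the rank-one update identity correctly isolates $\det(-E)$ plus the term $b\lambda^M v^{\mathsf T}\mathrm{adj}(-E)v$, the row-wise Hadamard bounds by $S_M^{1/2}$ beat the $\lambda^{2N}$ growth of $v$ precisely because $S_M<\tfrac12\lambda^{-4}$ (and you correctly identify this as the point where square-summability, not mere decay, is used), the $H^2$ membership of $G$ rules out poles of $Q$ on the closed unit disc other than $\lambda^{-1}$, and the simplicity of that pole guarantees $R(\lambda^{-1})\neq0$. This is a correct and complete proof of the cited theorem.
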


Let $\omega_1,\omega_2,\ldots\in\R_{\ge1}$ be a sequence such that $\lim \omega_n=\infty$ and $|\omega_n|\ge\e$
for some $\e>0$ for all $n$.
For each $n$, define $k_n\in\Z_{\ge 0}$ and $\omega_n^{*}$ so that
\[
\omega_n^*=r^{k_n}\omega_n\in(r,1].
\]
By passing to a subsequence if necessary, we may assume that $\omega_n^*$ converges.
Furthermore, by rescaling the coordinate system if necessary, we may assume that $\lim\omega_n^*=1$.

We choose some $j\in(2,\ldots,k)$ so that $a_j\neq a_1=0$.
Such a $j$ exists otherwise $\mu$ is a singleton.
Now we apply Theorem \ref{MAIN} to the IFS constructed in Lemma \ref{lm:newIFS}.
Then we have
\[
\sum_{j>C}\|ba_j\omega_n r^j\|^2\le C
\]
for a suitable constant $C>0$ depending on the IFS and $\e$.
This means that, in particular, 
\[
\sum_{j=0}^{k}\|ba_j\omega_n^*r^{-j}\|^2\le C
\]
for any fixed $k$, provided $n$ is sufficiently large depending on $k$.
By continuity, we conclude that
\[
\sum_{j=0}^{k}\|ba_jr^{-j}\|^2\le C
\] 
for all $k$, so
\[
\sum_{j=0}^{\infty}\|ba_jr^{-j}\|^2<\infty.
\]

By Pisot's theorem we conclude that $r^{-1}$ is Pisot and $ba_j\in\Q(r)$, which, in turn, yields $a_j\in\Q(r)$,
as required.

\subsection{Proof of Corollary \ref{cr:BS}}

By a result of Bufetov and Solomyak \cite{BS14}*{Proposition 5.5}, we have
\[
\sum_{j\ge 0}\|\omega r^j\|^2\ge c \log\log \omega.
\]
for all sufficiently large $\omega$ with some constant $c>0$.
Now the corollary follows by Theorem \ref{MAIN} applied to the IFS constructed in Lemma
\ref{lm:newIFS} (applied for any $j$ such that $a_j\neq0$).

\subsection{Proof of Corollary \ref{cr:Liou}}

Without loss of generality, we assume that $l_1\neq l_2$ or $l_1=l_2=l_3$.
We apply Lemma \ref{lm:newIFS} with $j=2$ so the new IFS satisfies $a_1'-a_2'=ba_2$
for some $b\in\Q(r)$.
Inspecting the proof of the lemma, we see that the new IFS also contains two maps whose translation
components differ by $ba_3$ with the same $b$.
Theorem \ref{MAIN} implies for any sufficiently large $\omega$ that
\[
|\wh\mu(\omega)|\le\exp(-c(\DC(ba_2\omega,r^{-1},\e)+\DC(ba_3\omega,r^{-1},\e)))
\]
for any $\e>0$ with some $c=c(\e)>0$.
Now the corollary follows by Theorem \ref{AG2}.

\section{A Salem-Zygmund type result}\label{sc:uniq}

The purpose of this section is the proof of Theorem \ref{Uniqueness}.
Our argument builds on the proof of the Salem-Zygmund Theorem as it is exposed in \cite{KL87}*{Chapter III}.
We consider IFS's with scaling factors $r^{l_1},\dots,r^{l_k}$ with $r^{-1}$ being a Pisot number and $l_1,\dots,l_k\in\Z_{>0}$ with $\gcd(l_1,\ldots,l_k)=1$. We also have the translations $a_1,\dots,a_k$. After translating and scaling the self-similar system, we can assume that $a_1=0$ and $a_2,\dots,a_k$ are algebraic integers in $\mathbb{Q}(r)$.

Our proof is based on a theorem of Meyer and Rajchman, which gives a sufficient condition that makes a set a set of uniqueness. We state here the version in \cite{BS92}*{Proposition 15.4.1}. Other sources around this result can be found in \cite{KL87}.

\begin{thm}[Meyer-Rajchman]\label{MR}
Let $F\subset\mathbb{R}$ be a compact set. Let $h$ be a group homomorphism from $\mathbb{R}$ (with group action $+$) to a locally compact abelian group $G$. Suppose that $h$ has dense image in $G$. If there is a proper compact set $K\subset G$ and a sequence of real numbers $t_N,N\geq 1$ tending to $\infty$ such that
\[
h(t_N F)\subset K
\]
for all $N\geq 1$, then $F$ is a set of uniqueness.
\end{thm}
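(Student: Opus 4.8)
The plan is to prove the criterion through the Piatetski--Shapiro/Kahane--Salem characterization already recalled in the introduction: a compact set is a set of uniqueness if and only if it carries no non-zero pseudofunction (\cite{KL87}*{Theorem II.4.1}). So I would assume that $S$ is a pseudofunction supported on $F$ and aim to show $S=0$. Since sets of uniqueness have measure zero, $F\ne\T$, so after a rotation I may assume $F$ lies in a compact subinterval of $(0,1)$. Fix $\phi\in C_c^\infty(\R)$ equal to $1$ on a neighborhood of $F$ and supported in $(0,1)$, and let $\wt S=\phi\cdot S^{\mathrm{per}}$ be the resulting compactly supported distribution on $\R$, where $S^{\mathrm{per}}=\sum_n c_n e(n\cdot)$ and $c_n=\wh S(n)$. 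It suffices to prove $\wt S=0$, since $\phi\equiv 1$ near $F\supset\operatorname{supp}S^{\mathrm{per}}$ then forces $S^{\mathrm{per}}=0$. Because $S$ is a \emph{pseudomeasure} (bounded coefficients), $\wh{\wt S}=\wh\phi*\sum_n c_n\delta_n$ is a \emph{bounded} continuous function on $\R$, and because $S$ is moreover a \emph{pseudofunction} ($c_n\to 0$), a short splitting argument (the tail of $\wh\phi$ is Schwartz, while the near-diagonal $c_n$ are small) shows $\wh{\wt S}(\omega)\to 0$ as $|\omega|\to\infty$. Upgrading the integer-decay of the coefficients to genuine decay of $\wh{\wt S}$ along the whole real line is the point at which the pseudofunction hypothesis is really used, and I would isolate it as a lemma.

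Next I would set up the dual side. Writing $\wh G$ for the Pontryagin dual, every $\g\in\wh G$ gives a character $\g\circ h$ of $\R$, hence a real frequency $\xi_\g$ with $\g(h(x))=e(\xi_\g x)$. Density of $h(\R)$ makes $\g\mapsto\xi_\g$ injective, so $\g\ne 0$ implies $\xi_\g\ne 0$. For each $N$ I consider the map $T_N(x)=h(t_N x)$ and the push-forward $R_N=(T_N)_*\wt S$, a compactly supported pseudomeasure on $G$ whose support lies in $h(t_N F)\subset K$. The crucial computation is $\wh{R_N}(\g)=\wh{\wt S}(\xi_\g t_N)$: these are bounded uniformly in $N$ by $\|\wh{\wt S}\|_\infty$, and since $t_N\to\infty$, the real-line decay above gives, for every fixed $\g$, that $\wh{R_N}(\g)\to \wh{\wt S}(0)\,\mathbf 1_{\{\g=0\}}$ pointwise on $\wh G$.

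The contradiction then comes from evaluating one pairing two ways, and this is where the properness of $K$ enters. Using that $K$ is a \emph{proper} compact set, choose $v\in A(G)\cap C_c(G)$ with $v\equiv 1$ on an open neighborhood of $K$ and, when $G$ is compact, with $\int_G v\,dm_G\ne 1$ (possible since $G\setminus K$ is a non-empty open set). On one hand, $v\circ T_N\equiv 1$ on the open set $T_N^{-1}(\{v=1\})\supset\operatorname{supp}\wt S$, so by locality $\langle R_N,v\rangle=\langle \wt S,1\rangle=\wh{\wt S}(0)$ for every $N$. On the other hand, Parseval on $G$ together with the uniform bound, the pointwise limit above, and dominated convergence give $\langle R_N,v\rangle\to \wh{\wt S}(0)\,\overline{\wh v(0)}\,m_{\wh G}(\{0\})$. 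When $G$ is non-compact the atom $m_{\wh G}(\{0\})$ vanishes and the limit is $0$; when $G$ is compact the factor $\overline{\wh v(0)}=\overline{\int_G v\,dm_G}\ne 1$; either way the two evaluations force $\wh{\wt S}(0)=0$.

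Finally I would run this argument for a modulated distribution. For arbitrary $\omega_0\in\R$ the distribution $e(-\omega_0 x)\wt S$ has the same compact support inside $F$, is still bounded and decaying on the Fourier side, and has Fourier transform $\wh{\wt S}(\,\cdot+\omega_0)$. Applying the previous paragraph to it yields $\wh{\wt S}(\omega_0)=0$. As $\omega_0$ was arbitrary, $\wh{\wt S}\equiv 0$, hence $\wt S=0$ and therefore $S=0$; so $F$ carries no non-zero pseudofunction and is a set of uniqueness. The main obstacle, besides the real-line decay lemma, is the functional-analytic bookkeeping on a general locally compact $G$: making sense of $R_N$ as a compactly supported pseudomeasure, justifying the Parseval pairing and the locality identity, and keeping track of exactly how the properness of $K$ is exploited. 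In the compact-group case relevant to Theorem \ref{Uniqueness} (where $G$ is a torus or a solenoid) all of these reduce to routine facts about the Fourier algebra of a compact abelian group.
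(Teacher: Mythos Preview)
The paper does not give its own proof of this statement: Theorem~\ref{MR} is quoted from \cite{BS92}*{Proposition 15.4.1} (with \cite{KL87} as a secondary reference) and used as a black box in the proof of Theorem~\ref{Uniqueness}. So there is no ``paper's proof'' to compare your proposal against.

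That said, your outline is essentially the standard argument one finds in those references: reduce via the Piatetski--Shapiro/Kahane--Salem criterion to showing that no non-zero pseudofunction lives on $F$, lift to a compactly supported distribution on $\R$ with Fourier transform tending to $0$ along the real line, push forward by $x\mapsto h(t_Nx)$ to get pseudomeasures on $G$ supported in $K$, and exploit properness of $K$ via a test function $v\in A(G)$ that is identically $1$ near $K$ but has $\wh v(0)\ne 1$. The two-evaluations trick and the final modulation step are exactly how the proof goes.

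One genuine wrinkle in your write-up: the sentence ``Since sets of uniqueness have measure zero, $F\ne\T$'' is circular --- you are in the middle of \emph{proving} that $F$ is a set of uniqueness, so you cannot yet invoke that conclusion. You need instead to argue directly from the hypothesis that the image of $F$ in $\T$ is proper (for instance: if $F$ contains an interval $[a,b]$, then $h(t_N[a,b])\subset K$ with $t_N(b-a)\to\infty$ and density of $h(\R)$ force $\overline{K}=G$, contradicting properness), or else simply restrict to the case $F\subset(0,1)$, which is all that is needed for the application in Section~\ref{sc:uniq}. Relatedly, be careful to distinguish the compact set $F\subset\R$ from its image in $\T$: your cutoff $\phi$ supported in $(0,1)$ and equal to $1$ near $F$ only makes sense once $F$ itself (not just its image mod $1$) sits inside $(0,1)$, which a ``rotation'' in $\T$ does not arrange when $F$ has diameter $\ge 1$.
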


We introduce some notation.
Let $\Omega=\{1,\dots,k\}^{\Z_{>0}}$.
For each $\omega\in \Omega$, we consider the sum
\[
S(\omega)=\sum_{i\geq 1} a_{\omega_i} r^{\sum_{j=1}^{i-1} l_{\omega_j}}.
\]
It follows directly from the definition that the set
$F:=S(\Omega)$ is the self-similar set associated to the IFS.
For each $N\in\Z_{\ge 0}$, we introduce the decomposition
\[
r^{-N}S(\omega)=\sum_{i\geq 1} a_{\omega_i} r^{-N+\sum_{j=1}^{i-1} l_{\omega_j}}=S_1^{(N)}(\omega)+S_2^{(N)}(\omega),
\]
where $S_1^{(N)}$ comprises those terms of the sum for which the exponent $-N+\sum_{j=1}^{i-1} l_{\omega_j}$
of $r$ is negative, and $S_2^{(N)}$ comprises those terms for which the exponent is non-negative.

We will use the following lemma to satisfy the conditions in Theorem \ref{MR}.

\begin{lma}\label{lm:uniqueness}
There is constant $C>0$ and a compact set $L\subset\R$ of measure $0$ depending only on the IFS, and
for each $R\in\R_{\ge 1}$, there is a number $\g_R\in\Q(r)$, such that the following holds for all $N\in\Z_{\ge 0}$
and $\omega\in\Omega$
\begin{align}
\g_R\le& CR^{d-1},\label{eq:claim1}\\
\|\g_Rr^{-j} S_1^{(N)}(\omega)\|\le& C R^{-1}\text{ for any $j\ge 0$},\label{eq:claim2}\\
S_2^{(N)}(\Omega)\subset& L,\label{eq:claim3}
\end{align}
where $d$ is the degree of $r$.
\end{lma}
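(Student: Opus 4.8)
The plan is to split the analysis according to whether we are in the "negative exponent" tail $S_1^{(N)}$ or the "non-negative exponent" tail $S_2^{(N)}$, and to exploit that $r^{-1}$ is Pisot so that powers of $r^{-1}$ are, up to exponentially small error, integers in a fixed finitely generated group. First I would set up the number-theoretic backbone: let $\lambda=r^{-1}$, an algebraic integer of degree $d$ with conjugates $\lambda=\lambda_1,\lambda_2,\ldots,\lambda_d$, all but the first of modulus $<1$. Since $a_2,\ldots,a_k$ are algebraic integers in $\Q(r)=\Q(\lambda)$, every $a_i$ is a $\Z$-linear combination of $1,\lambda,\ldots,\lambda^{d-1}$ (after clearing a common denominator which we may absorb into the IFS normalization), and hence so is every $a_i r^{-m}=a_i\lambda^m$ for $m\ge 0$: one uses the recurrence satisfied by the $\lambda^m$ to stay inside the $\Z$-module $M=\Z+\Z\lambda+\cdots+\Z\lambda^{d-1}$. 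This is what will control $S_1^{(N)}$.

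For \eqref{eq:claim2}: expand
\[
S_1^{(N)}(\omega)=\sum_{i:\,-N+\sum_{j<i}l_{\omega_j}<0} a_{\omega_i}\lambda^{N-\sum_{j<i}l_{\omega_j}},
\]
a finite sum of elements of $M$, so $S_1^{(N)}(\omega)\in M$. The key trick is that the "fractional part" with respect to the integer lattice is governed by the conjugate embeddings: for any $m\in M$ one has $\|m\|\le\sum_{i=2}^d|\sigma_i(m)|$ up to a multiplicative constant, and on the conjugate side each $\lambda_i^{N-\sum_{j<i}l_{\omega_j}}$ with the exponent $\ge 1$ is exponentially small. More precisely I would choose $\g_R$ to be $\lambda^{t}$ for the largest integer $t$ with $\lambda^t\le CR^{d-1}$ — this gives \eqref{eq:claim1} — and then estimate $\sigma_i(\g_R r^{-j}S_1^{(N)}(\omega))=\lambda_i^{t+j}\cdot\sigma_i(S_1^{(N)}(\omega))$; since $|\lambda_i|<1$ the factor $\lambda_i^{t+j}$ only helps for $j\ge 0$, and $\sigma_i(S_1^{(N)}(\omega))$ is a sum of $|\lambda_i|^{(\text{positive exponent})}$ times bounded coefficients, which sums to something $O(1)$. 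The exponent bookkeeping needs care: one wants $\|\g_R r^{-j}S_1^{(N)}(\omega)\|\le CR^{-1}$ uniformly, and the $R^{d-1}$ in \eqref{eq:claim1} versus $R^{-1}$ in \eqref{eq:claim2} is exactly the slack a Pisot number buys you (the geometric series of the $d-1$ small conjugates against one large one).

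For \eqref{eq:claim3}: here the point is simply that $S_2^{(N)}(\omega)=\sum_{i:\,-N+\sum_{j<i}l_{\omega_j}\ge 0}a_{\omega_i}r^{-N+\sum_{j<i}l_{\omega_j}}$ is, after factoring, of the form $r^{-N+(\text{something}\ge 0)}$ times a tail which is itself a value $S(\omega')$ of the same type of series, so $S_2^{(N)}(\Omega)$ is contained in the union over $m\ge 0$ and suffixes of $\lambda^m\cdot(\text{bounded set})$; but actually one should be more economical: $S_2^{(N)}(\omega)$ is a number whose "integer part" can be large but whose structure is that of $\sum_{i\ge 0}c_i\lambda^{-i}$ with bounded $c_i$, i.e. it lies in $\lambda^{e}F'$ for a fixed compact set and bounded $e$; taking $L$ to be the (compact, measure-zero since $F$ has measure zero and $\lambda^{-1}<1$) union $\bigcup_{0\le m\le\max l_i}\lambda^m F$ works. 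Measure zero of $L$ follows from the hypothesis that $F$ has zero Lebesgue measure, which is part of the standing assumptions in Theorem \ref{Uniqueness}. The main obstacle I anticipate is making the exponent bookkeeping in \eqref{eq:claim2} genuinely uniform in $N$, $\omega$ and $j$ simultaneously — one has to check that the worst case (where the random walk $\sum l_{\omega_j}$ straddles $N$ in the least favorable way) still leaves all conjugate exponents positive and bounded below, and separately verify that $\g_R\in\Q(r)$ with the claimed height/size bound; the Pisot property is precisely what rescues this, but the clean statement requires choosing $\g_R$ as a power of $\lambda$ rather than a general element of $\Q(r)$.
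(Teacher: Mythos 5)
Your overall architecture matches the paper's: make everything an algebraic integer in $\Q(\lambda)$, control $\|\cdot\|$ of the $S_1^{(N)}$ part through the conjugate embeddings and the fact that the trace is a rational integer, and absorb $S_2^{(N)}(\Omega)$ into a finite union of affine images of the (measure-zero) attractor $F$. The treatment of \eqref{eq:claim3} and the trace argument for \eqref{eq:claim2} are essentially the paper's. But there is a genuine gap at exactly the point you flag as the main obstacle, and your proposed resolution goes the wrong way.

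Taking $\g_R=\lambda^{t}$ with $\lambda^{t}\approx R^{d-1}$ does \emph{not} give \eqref{eq:claim2} for a general Pisot $\lambda$. Your bound is
$\|\g_R r^{-j}S_1^{(N)}(\omega)\|\le\sum_{i\ge2}|\lambda_i|^{t+j}\,|\sigma_i(S_1^{(N)}(\omega))|=O\bigl(\max_{i\ge2}|\lambda_i|^{t}\bigr)$,
and for this to be $O(R^{-1})=O(\lambda^{-t/(d-1)})$ you would need $\max_{i\ge2}|\lambda_i|\le\lambda^{-1/(d-1)}$. Since $\prod_{i\ge2}|\lambda_i|=|N(\lambda)|/\lambda\ge 1/\lambda$, the \emph{geometric mean} of the small conjugates is at least $\lambda^{-1/(d-1)}$, with equality only when $\lambda$ is a unit and all its conjugates have equal modulus. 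Whenever the conjugate moduli are unbalanced (e.g.\ a cubic Pisot number with one conjugate of modulus close to $1$), the largest small conjugate satisfies $|\lambda_2|^{t}\gg R^{-1}$, so \eqref{eq:claim2} fails for your $\g_R$; the "one large against $d-1$ small" heuristic only controls the product of the conjugates, whereas the trace bound is governed by the largest one. This is not a removable loss: the application needs precisely the balance $\g_R\cdot(\text{error})^{d-1}=O(1)$ to make the neighborhood $K$ have small measure. The fix is to drop the restriction to powers of $\lambda$ (the opposite of your closing remark) and instead produce $\g_R$ by Minkowski's theorem applied to the lattice $(\sigma_1,\ldots,\sigma_{d_1+d_2})(\Z[\lambda])\subset\R^{d_1}\times\C^{d_2}$: the box $\{|x_1|\le CR^{d-1},\ |x_i|\le CR^{-1}\ (i\ge2)\}$ has volume $\asymp C^{d}$ independent of $R$, so for $C$ large it contains a nonzero algebraic integer $\g_R$ with \emph{all} conjugates $\sigma_i(\g_R)$, $i\ge 2$, of size $O(R^{-1})$ simultaneously; with that $\g_R$ your trace argument closes the proof of \eqref{eq:claim2}. (A minor further point: your $L=\bigcup_{0\le m\le\max l_i}\lambda^{m}F$ omits the bounded translation coming from the straddling term $a_{\theta_m}r^{-N+l_{\theta_1}+\cdots+l_{\theta_{m-1}}}$; one should take $L$ to be the finite union of the images of $F$ under the finitely many affine maps $x\mapsto r^{s}x+r^{s-l_{\theta_m}}a_{\theta_m}$ that can occur.)
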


Before proving the lemma, we show how to finish the proof of Theorem \ref{Uniqueness}.
In Theorem \ref{MR}, we take the group homomorphism $h:\mathbb{R}\to\mathbb{T}^d$ defined by
\[
h(x)=(x,r^{-1}x,\dots,r^{-(d-1)}x)\mod \mathbb{Z}^d.
\]
As $1,r^{-1},\dots, r^{-(d-1)}$ must be linearly independent over the field of rational numbers,
we conclude that $h(\mathbb{R})$ is dense in $\mathbb{T}^d$.
Furthermore, we take $t_N:=\g_Rr^{-N}$ for suitably large fixed $R$ to be chosen below.
It follows from Lemma \ref{lm:uniqueness}, that $h(t_N F)$ is contained in the closed $CR^{-1}$-neighborhood
of the set $h(\g_R L)$.
We take this neighborhood to be $K$ in Theorem \ref{MR}.
It is, therefore, enough to show that $K\neq\T^d$ provided we choose $R$ sufficiently large.
We note that the Lebesgue measure of $K$ is at most
\[
O(R^{-d} N(\g_R L, R^{-1}))\le O( R^{-d} N(L,R^{-d})),
\]
where $N(X,s)$ denotes the minimal number of intervals of length $s$ needed to cover the set $X$.
Now the theorem follows because $L$ is a compact set of measure $0$.

\begin{proof}[Proof of Lemma \ref{lm:uniqueness}]
Write $\s_1=\Id,\s_2,\ldots,\s_{d_1}$ for the embeddings of $\Q(r)$ to $\R$
and $\s_{d_1+1},\ldots,\s_{d_1+d_2}$ for the complex embeddings of $\Q(r)$ taking one
from each pair of complex conjugates.
Then, as it is well known, $(\s_1,\ldots, \s_{d_1+d_2})$ embeds the integers in $\Q(r)$
as a lattice in $\R^d\cong\R^{d_1}\times\C^{d_2}$.

Let $\g_R$ be an algebraic integer in $\Q(r)$ such that $|\g_R|\le CR^{d-1}$ and
$\s_j(\g_R)\le CR^{-1}$ for $j\ge2$.
If $C$ is sufficiently large then these constraints prescribe a symmetric convex body in $\R^{d_1}\times\C^{d_2}$ whose volume
exceeds $2^d$ times the co-volume of the lattice that is the image of the integers in $\Q(r)$.
Thus, the existence of $\g_R$ follows from Minkowski's theorem, see e.g. \cite{Cas}*{Section III.2.2}.
Claim \eqref{eq:claim1} follows immediately.

We note that
\[
\g_R r^{-j}S_1^{(N)}(\omega)=\g_R\sum_{n=0}^\infty b_{\omega,j,N}(n) r^{-n},
\]
where $b_{\omega,j,N}(n)$ equals to $0$ or $a_i$ for some $i$ and it is non-zero for only finitely many $n$ for any fixed
values of $\omega$, $j$ and $N$.  
We write $B$ for a number that is larger than $\s_j(a_i)$ for any $i$ and $j$.
Then
\[
|\s_k(\g_R r^{-j}S_1^{(N)}(\omega))|\le B|\s_k(\g)|\sum_{n=0}^{\infty}|\s_k(r)|^{-n}\le O(R^{-1})
\]
for all $k\ge 2$.
(Here we used that $r^{-1}$ is Pisot, so $|\s_k(r)^{-1}|<1$.)
Now claim \eqref{eq:claim2} follows from the fact that $\g_R r^{-j}S_1^{(N)}(\omega)$ is an algebraic integer and hence
\[
\sum_{k=1}^{d}\s_k(\g_R r^{-j}S_1^{(N)}(\omega))\in\Z,
\]
where $\s_{d_1+d_2+1},\ldots,\s_d$ stands for the complex conjugates of $\s_{d_1+1},\ldots,\s_{d_1+d_2}$.

We turn to the proof of claim \eqref{eq:claim3}.
Fix $N$ and let $\Theta$ be the collection of all finite sequences
\[
\theta=(\theta_1,\ldots,\theta_m)\in\{1,\ldots,k\}^m
\]
such that $l_{\theta_1}+\ldots+l_{\theta_m}\ge N$, but $l_{\theta_1}+\ldots+l_{\theta_{m-1}}<N$.
We write $\theta.\omega$ for the concatenation of $\theta\in\Theta$ and $\omega\in\Omega$.
We consider $\theta.\omega$ an element of $\Omega$.

We note that
\[
S_2^{(N)}(\Omega)=\{S_2^{(N)}(\theta.\omega):\theta\in \Theta, \omega\in\Omega\}.
\]
Now we fix $\theta=(\theta_1,\ldots,\theta_m)\in\Theta$.
Then a direct calculation yields
\[
\{S_2^{(N)}(\theta.\omega):\omega\in\Omega\}=f(F),
\]
where $F$ is the self-similar set associated to the IFS and
\[
f(x)=r^{-N+l_{\theta_1}+\ldots+l_{\theta_m}} x.
\]
Indeed,
\[
S_2^{(N)}(\theta.\omega)=\sum_{i\ge 1}a_{\omega_i} r^{-N+\sum_{j=1}^{m}l_{\theta_j}+\sum_{j=1}^{i-1}l_{\omega_j}}
=f(S(\omega))
\]
for all $\omega\in\Omega$.
We note that there are finitely many possibilities for $f$ (independently of $N$), hence we can take $L$
to be the finite union of the images of $F$ under all these maps.
Since $F$ is assumed to be of measure $0$, the claim follows.
\end{proof}

\section*{Acknowledgements}
We are very grateful to Julien Br\'emont, Tom Kempton, Jialun Li and the anonymous referee for helpful comments and suggestions.

\bibliography{bib}

\end{document}